\newcommand {\citeAY} [1] {\citeNP {#1}}%
\newcommand {\citeAPY}[1] {\citeN  {#1}}%
\newcommand\eq[1] {(\ref{#1})}
\newcommand{\bfm}[1]{\mbox{\boldmath ${#1}$}}
\newcommand{\nonum}{\nonumber \\}
\newcommand{\beqa}{\begin{eqnarray}}
\newcommand{\eeqa}[1]{\label{#1}\end{eqnarray}}
\newcommand{\beq}{\begin{equation}}
\newcommand{\eeq}[1]{\label{#1}\end{equation}}
\newcommand{\Grad}{\nabla}
\newcommand{\Div}{\nabla \cdot}
\newcommand{\Curl}{\nabla \times}
\newcommand{\Md}{\partial}
\newcommand{\Gd}{\delta}
\newcommand{\Ge}{\epsilon}
\newcommand{\Gl}{\lambda}
\newcommand{\Go}{\omega}
\newcommand{\Gx}{\xi}
\newcommand{\Gy}{\psi}
\newcommand{\GF}{\Phi}
\newcommand{\BGm}{\bfm\mu}
\newcommand{\BGx}{\bfm\xi}
\newcommand{\BGF}{\bfm\Phi}
\def\Ba{{\bf a}}
\def\Bb{{\bf b}}
\def\Bk{{\bf k}}
\def\Bm{{\bf m}}
\def\Bn{{\bf n}}
\def\Bq{{\bf q}}
\def\Bu{{\bf u}}
\def\Bv{{\bf v}}
\def\Bx{{\bf x}}
\def\BC{{\bf C}}
\def\BG{{\bf G}}
\def\BI{{\bf I}}
\def\BL{{\bf L}}
\def\BX{{\bf X}}
\def \ba {\begin{array}}
\def \ea {\end{array}}
\def \refe #1.{(\ref{#1})}
\def \reff #1.{figure~\ref{#1}}
\def \refs #1.{section~\ref{#1}}
\def \refss #1.{subsection~\ref{#1}}
\def \refD #1.{Definition~\ref{#1}}
\def \refT #1.{Theorem~\ref{#1}}
\def \refL #1.{Lemma~\ref{#1}}
\def \refC #1.{Corollary~\ref{#1}}
\def \refR #1.{Remark~\ref{#1}}
\def \refE #1.{Example~\ref{#1}}
\def \refN #1.{Notation~\ref{#1}}
\newtheorem{Theorem}{Theorem}[section]
\newtheorem{Lemma}[Theorem]{Lemma}
\newtheorem{Remark}[Theorem]{Remark}
\newtheorem{Definition}[Theorem]{Definition}
\title{High frequency homogenization for travelling waves in periodic media}
\author{D. Harutyunyan\footnote{Department of Mathematics,
University of Utah, Salt Lake City, Utah 84112, USA }, G. W. Milton\footnote{Department of Mathematics, University of Utah, Salt Lake City, Utah 84112, USA } and R. V. Craster\footnote{Department of Mathematics, Imperial College London, SW7 2AZ, UK}}
\date{\today}
\begin{document}
\maketitle

\begin{abstract}

We consider high frequency homogenization in periodic media for
travelling waves of several different equations: the wave equation for scalar-valued waves such as acoustics;
the wave equation for vector-valued waves such as electromagnetism and elasticity; and a system that encompasses the Schr{\"o}dinger equation. This homogenization
applies when the wavelength is of the order of the size of the medium periodicity cell. The travelling wave is assumed to be the sum of two waves: a modulated Bloch carrier
wave having crystal wave vector $\Bk$ and frequency $\omega_1$ plus a modulated Bloch carrier wave having crystal wave vector $\Bm$ and frequency $\omega_2$.
We derive effective equations for the modulating functions, and then
prove that there is no coupling in the effective equations between the
two different waves both in the scalar and the system cases. To be
precise, we prove that there is no coupling unless $\omega_1=\omega_2$
and $(\Bk-\Bm)\odot\Lambda \in 2\pi \mathbb Z^d,$ where
$\Lambda=(\lambda_1\lambda_2\dots\lambda_d)$ is the periodicity cell of the medium and for any two vectors
$a=(a_1,a_2,\dots,a_d), b=(b_1,b_2,\dots,b_d)\in\mathbb R^d,$ the
product $a\odot b$ is defined to be the vector
$(a_1b_1,a_2b_2,\dots,a_db_d).$ This last condition
forces the carrier waves to be equivalent Bloch waves meaning that the coupling constants in the system of effective equations vanish.
 We use two-scale analysis and some new weak-convergence type lemmas. The analysis is not at the same level of rigor as that of Allaire and coworkers who use two-scale convergence theory to treat the problem, but has the advantage of simplicity which will allow it to be easily extended to the case where there is degeneracy of the Bloch
eigenvalue.
\end{abstract}
\textbf{Wave motion, Applied Mathematics}\\
\vspace{0.2cm}

\textbf{Asymptotics, Bloch waves, Homogenization}


\section{Introduction}
\label{Bsec:1}
\setcounter{equation}{0}

Periodic materials, or at least almost periodic materials abound in
nature: crystals are one of the most obvious, and prevalent,
examples. Opals are another example,
which consist of tiny spherical particles of silica arranged in a face-centered cubic array, which act like a diffraction grating to create the beautiful colors we see
 (\citeAY{Sanders:1964:CPO}; \citeAY{Greer:1969:SSA}). A sea mouse has a wonderful iridescence which is caused by a hexagonal array of voids in a matrix of chitin
(\citeAY{Parker:2001:AEA}). Recently it has been discovered that chameleons change their color by adjusting the lattice spacing of guanine nanocrystals in their
skins (\citeAY{Teyssier:2015:PCC}).
The word honeycomb is associated with bees, and the giant's causeway in Ireland consists of a hexagonal array of Basalt columns. Many patterns of tiles are periodic. Beautiful periodic structures, now also can be tailor made
using three dimensional lithography and printing techniques  (\citeAY{Pendry:2004:RLN}; \citeAY{Kadic:2012:PPM}; \citeAY{Buckmann:2012:TMM}; \citeAY{Buckmann:2014:TDD}; \citeAY{Meza:2014:SLR})
and of course two-dimensional periodic structures are even easier to produce (\citeAY{Bragg:1947:DMC}; \citeAY{Krauss:1996:TDP}; \citeAY{Yu:2014:FOD}).

There has of course been tremendous interest in the properties of periodic structures. The electronic properties of periodic structures were extensively studied (see, for example, \citeAY{Kittel:2005:ISS}) it being realized that the
band structure of the dispersion diagram for Schr\"{o}dinger's equation is intimately connected with whether a material is a conductor, insulator, or semiconductor, and type of semiconductor (according to whether there was a direct
gap or indirect gap). Then came the realization that the same concepts of dispersion diagrams and band gaps also apply at a macroscopic scale, to electromagnetic, and elastic wave propagation through periodic composite materials
(\citeAY{Bykov:1975:SEM}; \citeAY{John:1987:SLP}; \citeAY{Yablonovitch:1987:ISE}; \citeAY{Sigalas:1993:BSE}; \citeAY{Movchan:2007:BFB}). This lead to explosive growth in the area. An immense bibliography on the subject, with over 12,000 papers, approximately doubling every two-years since 1987 (until 2008, which was when the bibliography ceased being updated) was complied by Dowling (see http://www.phys.lsu.edu/\textasciitilde jdowling/pbgbib.html). For an excellent review of the subject see the book by \citeAPY{Joannopoulos:2008:ISS} [see also the beautiful article by \citeAPY{Gorishnyy:2005:SI} on acoustic band gap materials].

By suitably adapting the high-frequency homogenization approach of  \citeAPY{Craster:2010:HFH} we prove, that for different travelling waves in periodic medium, the effective equations in the bulk of the material for the function that modulates the wave do not {\it couple}, i.e., waves having wavevectors $\Bk/\Ge$ do not couple with waves having wavevectors $\Bm/\Ge$: here $\Ge$ is a small parameter characterizing the length of the unit cell of periodicity, and we are looking at the homogenization limit $\Ge\to 0$. Thus the scaling is such that the short scale oscillations in the waves are on the same scale as the length of the unit cell, in contrast with the usual low frequency homogenization where the wavelength is much larger than the size of the unit cell.
We assume, for simplicity, in this first analysis that the Bloch equations are non-degenerate at these wave-vectors.
A treatment without this assumption has been given by Brassart and Lenczner \citeAY{Brassart:2010:TSM} for the wave equation.
Then to leading order we find that
for the waves $(\Bk,\omega_1)$ and $(\Bm,\omega_2)$, the field (or potential) that solves the equations takes the form
\beq u(t,\Bx)\approx f_0^{(1)}(t,\Bx)V_0^{(1)}(\Bx/\Ge)e^{-i(\Bk\cdot(\Bx/\Ge)-\omega_1 (t/\Ge))}+ \\
    f_0^{(2)}(t,\Bx)V_0^{(2)}(\Bx/\Ge)e^{-i(\Bm\cdot(\Bx/\Ge)-\omega_2 (t/\Ge))}.
\eeq{B.1}
Here $V_0^{(1)}(\Bx/\Ge)e^{-i(\Bk\cdot(\Bx/\Ge)-\omega_1 (t/\Ge))}$ and $V_0^{(2)}(\Bx/\Ge)e^{-i(\Bm\cdot(\Bx/\Ge)-\omega_2 (t/\Ge))}$ are the Bloch solutions at the wavevectors $\Bk/\Ge$ and $\Bm/\Ge$,
( in which $V_0^{(1)}(\Bx/\Ge)$ and $V_0^{(2)}(\Bx/\Ge)$ have the same unit cell of periodicity as the periodic material we are considering) and the modulating functions $f_0^{(1)}(t,\Bx)$ and  $f_0^{(2)}(t,\Bx)$
satisfy the {\it homogenized equation}
\begin{equation}
\label{B2}
\nabla g\cdot\nabla f_0^{(\ell)}(t,\Bx)+\frac{\partial f_0^{(\ell)}(t,\Bx)}{\partial t}=0,\quad \ell=1,2,
\end{equation}
and $\Go/\Ge=g(\Bk/\Ge)$ is the dispersion relation. This effective equation admits as solutions, the expected travelling waves
\beq f_0^{(1)}(t,\Bx) =  h_1(\Bv_1\cdot\Bx-t),\quad f_0^{(2)}(t,\Bx)=h_2(\Bv_2\cdot\Bx-t)
\eeq 
where $h_1$ and $h_2$ are arbitrary functions and $\Bv_1$ and $\Bv_2$ are the group velocities which satisfy
$\Bv_i\cdot\nabla g=1$. In the time harmonic case, where the waves are not travelling the first results on high frequency homogenization are those
of \citeAPY{Birman:2006:HMP}, which provides a rigorous justification of high frequency homogenization. That paper is difficult to follow unless one is an expert in spectral theory, so in the appendix we make the connection between the paper of \citeAPY{Birman:2006:HMP} and that of \citeAPY{Craster:2010:HFH}. The approach of  \citeAPY{Craster:2010:HFH} is straightforward and very reminiscent of the standard formal approach to homogenization (see, for example, \citeAY{Bensoussan:1978:AAP}, who furthermore homogenize a Schrodinger equation at high frequency).
One treats the large and small length scales as independent variables $\BX$ and $\xi$, that are coupled when one replaces in the governing equations any derivative $\Md/\Md x_j$ with $\Md/\Md X_j+(1/\Ge)\Md/\Md \Gx_j$. This approach is not
rigorous, so will need to be  supplemented at some stage, by either a rigorous analysis, or by supporting numerical calculations. \citeAPY{Hoefer:2011:DMH}
have done a careful rigorous analysis, with error estimates, for high frequency homogenization applied to the Schr\"odinger equation,
where they keep higher terms in the expansion.

It is also to be emphasised that there has been extensive work by Allaire and
co-workers in this area using the ideas of two-scale convergence introduced by \citeAPY{Nguetseng:1989:GCR} and \citeAPY{Allaire:1992:HTS}
particularly for the acoustic equation
\begin{equation}
\label{B3}
\nabla\cdot\left(\mathbf{a}\left(\frac{\mathbf{x}}{\epsilon}\right)\nabla u\right)=b\left(\frac{\mathbf{x}}{\epsilon}\right)\frac{\partial^2 u}{\partial t^2},\qquad \mathbf{x}=(x_1,x_2,\dots,x_d)\in\mathbb R^d,
\end{equation}
assuming ellipticity for the tensor $a$, and positivity for the scalar
$b$ (\citeAY{Allaire:2009:DBW}; \citeAY{Allaire:2011:DGO}; \citeAY{Brassart:2010:TSM}).
These assumptions are also needed in our analysis to ensure unique solvability of the Bloch equation. The work of \citeAPY{Allaire:2011:DGO} goes further than us
in that they prove that the enveloping function converges to that predicted by the two-scale analysis, and that they prove that the enveloping function obeys a Schr{\"o}dinger type equation as expected from the paraxial approximation. At the level of our analysis the dispersion of the enveloping function is absent, so
further work needs be done to account for it. We also emphasize that there is much older work in the book of \citeAY{Bensoussan:1978:AAP} on high frequency homogenization of the Schr\"odinger equation. Particularly, we draw the reader's attention to the effective equations (4.33) and the formulae for the effective moduli both expressed in terms of the derivatives of the dispersion formula and in terms of the solution to a cell problem (see the discussion at the bottom of page 352).
In our analysis we treat electromagnetism and elasticity in one
single stroke in Section 5, and we treat a general class of equations which includes the Scr{\"o}dinger equation in Section 6. Again we note
that \citeAPY{Allaire:2005:HSE} treated the Schr\"{o}dinger equation and \citeAPY{Allaire:2013:DBW} treated Maxwell's equations using the tool of two-scale
convergence.

Although much of the work discussed thus far is analytical, it is useful to note that
these types of effective media have been tested numerically and
applied to interpret and design experiments \cite{Ceresoli:2015:DEA}. The most striking behavior occurs when the effective equations which describe the
macroscopic modulation of the waves are hyperbolic rather than elliptic: then the radiation concentrates along the characteristic lines and the star shaped patterns
predicted by high frequency homogenization are seen in full finite element simulations (see the figures in \citeAY{Makwana:2015:WMM} and \citeAY{Antonakakis:2014:HEP}).

The main factor which influences the macroscopic equations one gets is
the degeneracy of the wavefunctions associated with the expansion point (see, for example, \citeAY{Antonakakis:2013:HFH}). The simplest case,
and the one first treated by  \citeAPY{Birman:2006:HMP}, \citeAPY{Craster:2010:HFH} is when there is no degeneracy.
For simplicity, and because this is the generic case, we will also assume there is no degeneracy. The next section is then the homogenization
of a model equation.

\setcounter{equation}{0}
\section{A scalar valued wave traveling in a periodic medium}
\label{Bsec:2}
Our first aim is to homogenize the model equation
\begin{equation}
\label{B2.1}
\nabla\cdot\left(\mathbf{a}\left(\frac{\mathbf{x}}{\epsilon}\right)\nabla u\right)=b\left(\frac{\mathbf{x}}{\epsilon}\right)\frac{\partial^2 u}{\partial t^2},\qquad \mathbf{x}=(x_1,x_2,\dots,x_d)\in\mathbb R^d,
\end{equation}
where $\mathbf{a}\colon\mathbb R^d\to\mathbb R^{d\times d}$ is a symmetric matrix and $b\colon\mathbb R^d\to\mathbb R$ are also cell periodic with the same cell of periodicity. We assume, for simplicity, that the unit cell is a rectangular prism, though of course we expect the analysis to go through for any
Bravais lattice. This is the equation of acoustics when $u$
is the pressure, $b({\mathbf{x}}/{\epsilon})$ is inverse of the bulk modulus of the fluid, and $\mathbf{a}({\mathbf{x}}/{\epsilon})$ is the inverse
of the density,  which we allow to be anisotropic (as may be the case in metamaterials).

We rewrite the equation in the form

\begin{equation}
\label{B2.2}
\begin{pmatrix}
\frac{\partial}{\partial t}\\
\nabla
\end{pmatrix}\cdot \mathbf{C}
\begin{pmatrix}
\frac{\partial}{\partial t}\\
\nabla
\end{pmatrix}u(\mathbf{x},t)=0,\qquad
\text{where}\qquad
\mathbf{C}=
\begin{pmatrix}
-b & 0\\
0 & \mathbf{a}
\end{pmatrix}.
\end{equation}
Denoting the new variable $x_0=t$ and
$$\overline{\nabla}=
\begin{pmatrix}
\frac{\partial}{\partial x_0}\\
\nabla
\end{pmatrix},
$$
we get the equation
\begin{equation}
\label{B2.3}
\overline{\nabla}\cdot(\mathbf{C}\overline{\nabla}u(\mathbf{x},t))=0.
\end{equation}
As is standard in homogenization theory we replace $\overline{\nabla}$ by $\overline{\nabla}_{X}+\frac{1}{\epsilon}\overline{\nabla}_{\xi},$
where $\BX=(x_0,x_1,x_2,\dots,x_d)$ is the slow variable and
$\BGx=(\xi_0,\xi_1,\xi_2,\dots,\xi_d)=\frac{X}{\epsilon}$ is the fast variable. The motivation for this is that if
we have a function $g(\Bx,\Bx/\Ge)=g(\BX,\BGx)$, then $\overline{\nabla}$ acting on $g(\Bx,\Bx/\Ge)$ gives the same result
as $\overline{\nabla}_{X}+\frac{1}{\epsilon}\overline{\nabla}_{\xi}$ acting on $g(\BX,\BGx)$, with $\BGx$ and $\BX$ treated
as independent variables. Thus we are scaling space and time in the same way as we believe is appropriate when
the dispersion diagram is such that $\frac{\partial\omega}{\partial\Bk}$ has a nonzero finite value. At points where
$\frac{\partial\omega}{\partial\Bk}$ is zero we do not believe this is an appropriate scaling as indicated by \citeAPY{Birman:2006:HMP}
and later by \citeAPY{Craster:2010:HFH}. Thus we assume that we are in the case when $\frac{\partial\omega}{\partial\Bk}$ has a nonzero finite value
and therefore making this replacement we arrive at
\begin{align}
\label{B2.4}
\overline{\nabla}_{\xi}\cdot(\mathbf{C}(\BGx)\overline{\nabla}_{\xi}u(\BX,\BGx))
+\epsilon\overline{\nabla}_{\xi}\cdot(\mathbf{C}(\BGx)\overline{\nabla}_{X}u(\BX,\BGx))\\ \nonumber
+\epsilon\overline{\nabla}_{X}\cdot(\mathbf{C}(\BGx)\overline{\nabla}_{\xi}u(\BX,\BGx))
+\epsilon^2\overline{\nabla}_{X}\cdot(\mathbf{C}(\BGx)\overline{\nabla}_{X}u(\BX,\BGx))=0.
\end{align}
Now we choose to homogenize waves which on the short length scale look like Bloch solutions, with frequencies
$\Go_1$ and $\Go_2$ and and wavevectors $\Bk$ and $\Bm$, but which are modulated on the long length scale. Our aim is to find the macroscopic
equation satisfied by the modulation. We assume that the wavenumber-frequency pairs $(\Bk,\omega_1)$ and
 $(\Bm,\omega_2)$ belong to the dispersion diagram. We will prove later in Section~\ref{Bsec:4}, that any two different waves do not interact (do not couple).


We have, that $u(\BX,\BGx)$ is the sum of two waves,
\begin{equation}
\label{B2.5}
u(\BX,\BGx)=u^{(1)}(\BX,\BGx)+u^{(2)}(\BX,\BGx),
\end{equation}
where for fixed $\BX$ the functions $u^{(1)}(\BX,\BGx)$, $u^{(2)}(\BX,\BGx)$ as functions of $\BGx$ are Bloch functions,
oscillating at frequencies $\Go_1$ and $\omega_2$ and having wavevectors $\Bk$ and $\Bm$ respectively.
Thus, the functions $e^{-i(\Bk\cdot\BGx'-\omega_1\xi_0)}u^{(1)}(\BX,\BGx)$ and $e^{-i(\Bm\cdot\BGx'-\omega_2\xi_0)}u^{(1)}(\BX,\BGx)$ are periodic in
$\BGx'=(\xi_1,\xi_2,\dots,\xi_d)$ and independent of $\xi_0.$ We seek a solution of equation (\ref{B2.4}) in the form
\begin{equation}
\label{B2.6}
u^{(i)}(\BX,\BGx)=u_0^{(i)}(\BX,\BGx)+\epsilon u_1^{(i)}(\BX,\BGx)+\epsilon^2u_2^{(i)}(\BX,\BGx)+\cdots,\qquad i=1,2.
\end{equation}
Plugging in the expressions of $u^{(1)}(\BX,\BGx)$ and $u^{(2)}(\BX,\BGx)$ in (\ref{B2.4}) we get at the zeroth order,
\begin{equation}
\label{B2.7}
\sum_{i=1}^2\overline{\nabla}_{\xi}\cdot(\mathbf{C}(\BGx)\overline{\nabla}_{\xi}u_0^{(i)}(\BX,\BGx))=0.
\end{equation}
From the uniqueness of the solutions to the Bloch equations, up to a multiplicative complex constant,
equation (\ref{B2.7}) implies that $u_0^{(1)}(\BX,\BGx)$ and $u_0^{(2)}(\BX,\BGx)$ can be separated in the fast and slow variables, i.e.,
\begin{equation}
\label{B2.8}
u_0^{(1)}(\BX,\BGx)=f_0^{(1)}(\BX)U_0^{(1)}(\BGx)\quad\text{and}\quad u_0^{(2)}(\BX,\BGx)=f_0^{(2)}(\BX)U_0^{(2)}(\BGx),
\end{equation}
where $U_0^{(1)}(\BGx)$ and $U_0^{(2)}(\BGx)$ solve the Bloch equations
\begin{equation}
\label{B2.9}
\overline{\nabla}_{\xi}\cdot(\mathbf{C(\BGx')}\overline{\nabla}_{\xi}U_0^{(i)}(\BGx))=0,\qquad i=1,2.
\end{equation}
and $f_0^{(1)}(\BX)$ and $f_0^{(2)}(\BX)$ are the modulating functions whose governing equation we seek to find.

It is then clear that $U_0^{(1)}(\BGx)$ and $U_0^{(2)}(\BGx)$ have the form
\begin{equation}
\label{B2.10}
U_0^{(1)}(\BGx)=V_0^{(1)}(\BGx')e^{-i(\Bk\cdot\BGx'-\omega_1\xi_0)}\quad\text{and}\quad U_0^{(2)}(\BGx)=V_0^{(2)}(\BGx')e^{-i(\Bm\cdot\BGx'-\omega_2\xi_0)},
\end{equation}
where $V_0^{(1)}$ and $V_0^{(2)}$ are cell-periodic functions of $\BGx'$, solving
\beqa
&~&\omega_1^2b(\BGx')V_0^{(1)}(\BGx')+(-i\Bk+\overline{\nabla}_{\xi'})\cdot \mathbf{a}(\BGx')(-i\Bk+\overline{\nabla}_{\xi'})V_0^{(1)}(\BGx')=0\\ \nonumber
&~&\omega_2^2b(\BGx')V_0^{(2)}(\BGx')+(-i\Bm+\overline{\nabla}_{\xi'})\cdot \mathbf{a}(\BGx')(-i\Bm+\overline{\nabla}_{\xi'})V_0^{(2)}(\BGx')=0.
\eeqa{B2.11}

At the first order we get the following equation

\begin{equation}
\label{B2.12}
\sum_{i=1}^2\overline{\nabla}_{\xi}\cdot(\mathbf{C}(\BGx')\overline{\nabla}_{\xi}u_1^{(i)}(\BX,\BGx))=
-\sum_{i=1}^2\left[\overline{\nabla}_{\xi}\cdot(\mathbf{C}(\BGx')\overline{\nabla}_{X}u_0^{(i)}(\BX,\BGx))+
\overline{\nabla}_{X}\cdot(\mathbf{C}(\BGx')\overline{\nabla}_{\xi}u_0^{(i)}(\BX,\BGx))\right]
\end{equation}

Next we take the complex conjugate of the equations in (\ref{B2.9}) to get
\begin{equation}
\label{B2.13}
\overline{\nabla}_{\xi}\cdot(\mathbf{C(\xi')}\overline{\nabla}_{\xi}U_0^{(i)\ast}(\BGx))=0,\qquad i=1,2,
\end{equation}
where $z^\ast$ denotes the complex conjugate of $z.$
Note, that equation (\ref{B2.12}) can be written in the following way
\begin{align}
\label{B2.14}
\sum_{l=1}^2\overline{\nabla}_{\xi}\cdot(\mathbf{C}(\BGx')&\overline{\nabla}_{\xi}u_1^{(l)}(\BX,\BGx))\\ \nonumber
&=-\sum_{l=1}^2\sum_{i,j=0}^d\frac{\partial f_0^{(l)}(\BX)}{\partial X_j}\left(2C_{ij}(\BGx')\frac{\partial U_0^{(l)}(\BGx)}{\partial \xi_i}
+\frac{\partial C_{ij}(\BGx')}{\partial \xi_i}U_0^{(l)}(\BGx)\right).
\end{align}
Assume now $Q$ is a large rectangular cell in the coordinate system $\BGx.$
Following the idea in (\citeAY{Bensoussan:1978:AAP}, page 307, see also \citeAPY{Craster:2010:HFH}, we multiply equation (\ref{B2.14}) by
$U_0^{(p)\ast}$ for $p=1,2$ and taking the average over $Q$ of both sides of the obtained identity we get
\begin{align}
\label{B2.15}
&\frac{1}{Q}\int_{|Q|}\sum_{l=1}^2U_0^{(p)\ast}\overline{\nabla}_{\xi}\cdot(\mathbf{C}(\BGx')
\overline{\nabla}_{\xi}u_1^{(l)}(\BX,\BGx)) d\BGx\\ \nonumber
&=
\sum_{l=1}^2\sum_{i,j=0}^d\frac{\partial f_0^{(l)}(\BX)}{\partial X_j}\frac{1}{|Q|}\int_{Q}U_0^{(p)\ast}\left(2C_{ij}(\BGx')\frac{\partial U_0^{(l)}(\BGx)}{\partial \xi_i}
+\frac{\partial C_{ij}(\BGx')}{\partial \xi_i}U_0^{(l)}(\xi)\right) d\BGx,
\end{align}
where $Q=\prod_{i=0}^d[0,a_i]$ and $|Q|$ is the volume of $Q.$

Let us show, that the left hand side of (\ref{B2.15}) goes to zero when $Q\to\infty$, by which we mean $a_i\to\infty$ for $i=0,1,2,\dots,d.$
Indeed, after an integration by parts, we get
\begin{align}
\label{B2.16}
&\int_{Q}\sum_{l=1}^2U_0^{(p)\ast}\overline{\nabla}_{\xi}\cdot(\mathbf{C}(\BGx')
\overline{\nabla}_{\xi}u_1^{(l)}(\BX,\BGx)) d\BGx=\\ \nonumber
&=\int_{\partial Q}\sum_{l=1}^2U_0^{(p)\ast}\Bn\cdot(\mathbf{C}(\BGx')
\overline{\nabla}_{\xi}u_1^{(l)}(\BX,\BGx))dS-\int_{Q}\sum_{l=1}^2\overline{\nabla}_{\xi}U_0^{(p)\ast}\mathbf{C}(\BGx')
\overline{\nabla}_{\xi}u_1^{(l)}(\BX,\BGx)d\BGx,
\end{align}
where $\Bn$ is the outward unit normal to $\partial Q.$
On the other hand we have by multiplying (\ref{B2.13}) by $u_1^{(l)}(\BX,\BGx)$
and integrating the obtained equality over $Q$ by parts we get
\begin{equation}
\label{B2.17}
\int_{\partial Q}\sum_{l=1}^2u_1^{(l)}(\BX,\BGx)\Bn\cdot(\mathbf{C}(\BGx')\overline{\nabla}_{\xi}U_0^{(p)\ast}(\BGx))dS
-\int_{Q}\sum_{l=1}^2\overline{\nabla}_{\xi}U_0^{(p)\ast}\mathbf{C}(\BGx')\overline{\nabla}_{\xi}u_1^{(l)}(\BX,\BGx)d\BGx=0.
\end{equation}
Thus by combining (\ref{B2.16}) and (\ref{B2.17}) we get

\begin{align}
\label{B2.18}
&\int_{Q}\sum_{l=1}^2U_0^{(p)\ast}\overline{\nabla}_{\xi}\cdot(\mathbf{C}(\BGx')
\overline{\nabla}_{\xi}u_1^{(l)}(\BX,\BGx))d\BGx=\\ \nonumber
&=\int_{\partial Q}\sum_{l=1}^2u_1^{(l)}(\BX,\BGx)\Bn\cdot(\mathbf{C}(\BGx')\overline{\nabla}_{\xi}U_0^{(p)\ast}(\BGx))dS+
\int_{\partial Q}\sum_{l=1}^2U_0^{(p)\ast}\Bn\cdot(\mathbf{C}(\BGx')\overline{\nabla}_{\xi}u_1^{(l)}(\BX,\BGx))dS
\end{align}
Taking into account the continuity and the periodic structure of the functions $\overline{\nabla}_{\xi}u_1^{(l)}, \overline{\nabla}_{\xi}U_0^{(p)\ast}$
and the tensor $\BC(\BGx')$ we have the estimate
$$\left|\int_{Q}\sum_{l=1}^2U_0^{(p)\ast}\overline{\nabla}_{\xi}\cdot(\mathbf{C}(\BGx')
\overline{\nabla}_{\xi}u_1^{(l)}(\BX,\BGx))d\BGx\right|\leq M\mathcal{H}^{d-1}(\partial Q),$$
where $\mathcal{H}^{d-1}(\partial Q)$ is the $d-1$ dimensional Hausdorff measure in $\mathbb R^d,$ i.e., it is the surface measure, and $M$
is a sufficiently large constant. Our claim follows now from the obvious equality
$$\lim_{Q\to\infty}\frac{\mathcal{H}^{d-1}(\partial Q)}{|Q|}=0.$$
In other words this condition over the supercell $Q,$ which is the analog of the the solvability condition that was
over a unit cell in \citeAPY{Craster:2010:HFH}, gives us the following equations:
\begin{equation}
\label{B2.19}
\sum_{l=1}^2\sum_{j=0}^dd_{jp}^{(l)}\frac{\partial f_0^{(l)}(\BX)}{\partial X_j}=0, \quad \text{for} \quad  p=1,2,
\end{equation}
where the coefficients entering these homogenized equations are given by
\begin{align}
\label{B2.20}
&d_{jp}^{(l)}=\lim_{Q\to\infty}\sum_{i=0}^d\frac{1}{|Q|}\int_{Q}U_0^{(p)\ast}\left(2C_{ij}(\BGx')
\frac{\partial U_0^{(l)}(\BGx)}{\partial \xi_i}+\frac{\partial
  C_{ij}(\BGx')}{\partial \xi_i}U_0^{(l)}(\BGx)\right) d\BGx,\\ \nonumber
&j=0,1,\dots,d,\quad p,l=1,2.
\end{align}
As will be seen in the next Section~\ref{Bsec:4}, the formula (\ref{B2.20}) can be significantly simplified.

\setcounter{equation}{0}

\section{Wave coupling analysis}
\label{Bsec:4}
\setcounter{equation}{0}

In this section we consider the following question: is there interaction between any two different waves?
Let us look to see if there is interaction in the homogenized equations between waves
corresponding to points $(\Bk,\Go_1)$, and $(\Bm,\Go_2)$ on the dispersion diagram.
To that end, we must analyze the coupling coefficients in the homogenized equations. We have seen in Section~\ref{Bsec:2}, that the homogenized
equations are given by
\begin{equation}
\label{B4.1}
\sum_{l=1}^2\sum_{j=0}^dd_{jp}^{(l)}\frac{\partial f_0^{(l)}(\BX)}{\partial X_j}=0, \quad \text{for} \quad  p=1,2,
\end{equation}
where the coefficients entering these homogenized equations are given by
\begin{align}
\label{B4.2}
&d_{jp}^{(l)}=\lim_{Q\to\infty}\sum_{i=0}^d\frac{1}{|Q|}\int_{Q}U_0^{(p)\ast}\left(2C_{ij}(\BGx')
\frac{\partial U_0^{(l)}(\BGx)}{\partial \xi_i}+\frac{\partial C_{ij}(\BGx')}{\partial \xi_i}U_0^{(l)}(\BGx)\right)d\BGx,\\ \nonumber
&j=0,1,\dots,d,\quad p,l=1,2.
\end{align}
Denote furthermore by $\Lambda=\prod_{i=1}^d [0,\lambda_i]-$ the cell of periodicity and
$\Lambda_{diag}=(\lambda_1,\lambda_2,\dots,\lambda_d)-$ist diagonal.
Given $d-$vectors $\Bk=(k_1,k_2,\dots,k_d)$ and $\Bm=(m_1,m_2,\dots,m_d),$ denote
\begin{equation}
\label{B4.3}
\Bk\odot\Bm=(k_1m_1,k_2m_2,\dots,k_dm_d).
\end{equation}
The next theorem gives a necessary condition for coupling between the waves $u_1$ and $u_2.$
\begin{Theorem}
\label{BThm:4.1}
For the waves $(\Bk,\omega_1)$ and $ (\Bm,\omega_2)$ to couple, it is necessary, that $\omega_1=\omega_2$ and
$\frac{(\Bk-\Bm)\odot\Lambda_{diag}}{2\pi}\in \mathbb Z^d.$
\end{Theorem}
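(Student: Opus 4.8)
The plan is to show that the off-diagonal coupling coefficients $d_{j1}^{(2)}$ and $d_{j2}^{(1)}$ (for $j=0,1,\dots,d$) all vanish unless the stated resonance conditions hold. First I would substitute the explicit Bloch forms (\ref{B2.10}) into the coefficient formula (\ref{B4.2}). Writing $U_0^{(1)}(\BGx)=V_0^{(1)}(\BGx')e^{-i(\Bk\cdot\BGx'-\omega_1\xi_0)}$ and $U_0^{(2)}(\BGx)=V_0^{(2)}(\BGx')e^{-i(\Bm\cdot\BGx'-\omega_2\xi_0)}$, the product $U_0^{(p)\ast}(\cdots)U_0^{(l)}(\cdots)$ appearing under the integral, for $p\neq l$, carries an overall oscillatory factor $e^{i((\Bk-\Bm)\cdot\BGx'-(\omega_1-\omega_2)\xi_0)}$ (or its conjugate) multiplying a function that is periodic in $\BGx'$ with cell $\Lambda$ and independent of $\xi_0$.

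The key step is then a Riemann–Lebesgue / averaging argument over the supercell $Q=\prod_{i=0}^d[0,a_i]$. Split $Q=[0,a_0]\times Q'$ with $Q'=\prod_{i=1}^d[0,a_i]$. The $\xi_0$-integral is $\int_0^{a_0} e^{-i(\omega_1-\omega_2)\xi_0}\,d\xi_0$, which is $O(1)$ in modulus (bounded by $2/|\omega_1-\omega_2|$ when $\omega_1\neq\omega_2$), so after dividing by $|Q|=a_0|Q'|$ it contributes a factor $O(1/a_0)\to0$; hence $\omega_1=\omega_2$ is forced for a nonzero limit. Assuming $\omega_1=\omega_2$, the $\xi_0$-integral is just $a_0$ and cancels, leaving an average over $Q'$ of a $\Lambda$-periodic function $P(\BGx')$ times $e^{i(\Bk-\Bm)\cdot\BGx'}$. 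Expanding $P$ in its Fourier series over the reciprocal lattice $\frac{2\pi}{\Lambda_{\mathrm{diag}}}\mathbb{Z}^d$, the only Fourier mode whose average over the growing box $Q'$ does not decay like $\mathcal H^{d-1}(\partial Q')/|Q'|\to0$ is the one resonating with $e^{i(\Bk-\Bm)\cdot\BGx'}$; this survives precisely when $(\Bk-\Bm)$ lies in the reciprocal lattice, i.e.\ $\frac{(\Bk-\Bm)\odot\Lambda_{\mathrm{diag}}}{2\pi}\in\mathbb{Z}^d$. I would make this rigorous either via the elementary bound $\big|\int_{Q'} e^{i\Bc\cdot\BGx'}d\BGx'\big|\le C_{\Bc}\,\mathcal H^{d-1}(\partial Q')$ for $\Bc\neq0$ (product of one-dimensional bounds $|\int_0^{a}e^{ic_\nu t}dt|\le 2/|c_\nu|$ whenever some $c_\nu\neq0$), applied mode-by-mode with control of Fourier coefficients via smoothness of $V_0^{(\ell)}$, $\Ba$, $b$; or by invoking the weak-convergence lemmas the abstract advertises.

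I expect the main obstacle to be the interchange of the Fourier summation with the limit $Q\to\infty$, i.e.\ justifying that the tail of the Fourier series contributes negligibly uniformly in the box size. This needs some regularity of $\BC(\BGx')$ and of the Bloch eigenfunctions $V_0^{(\ell)}$ — elliptic regularity for (\ref{B2.11}) under the ellipticity/positivity assumptions on $\Ba,b$ gives enough decay of the Fourier coefficients of $P$ to dominate the series by a convergent one, after which term-by-term passage to the limit is legitimate. A secondary point to handle cleanly is that the integrand in (\ref{B4.2}) involves $\partial C_{ij}/\partial\xi_i$, which is only a distribution if $\Ba,b$ are merely $L^\infty$; here one should either assume piecewise smoothness (so the derivative is a measure supported on interfaces, still $\Lambda$-periodic, and the same averaging applies) or integrate by parts once more to move the derivative onto the smooth Bloch factors. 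Once these technical points are dispatched, the conclusion is immediate: if either $\omega_1\neq\omega_2$ or $\frac{(\Bk-\Bm)\odot\Lambda_{\mathrm{diag}}}{2\pi}\notin\mathbb{Z}^d$, then $d_{j1}^{(2)}=d_{j2}^{(1)}=0$ for all $j$, so the system (\ref{B4.1}) decouples, which is the contrapositive of the theorem.
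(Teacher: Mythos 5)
Your proposal is correct and follows essentially the same route as the paper: substitute the Bloch forms (\ref{B2.10}) so that for $p\neq l$ the integrand of $d_{jp}^{(l)}$ in (\ref{B4.2}) becomes a plane wave $e^{\pm i((\Bk-\Bm)\cdot\BGx'-(\omega_1-\omega_2)\xi_0)}$ multiplying a cell-periodic function, and then show that the supercell average vanishes unless $\omega_1=\omega_2$ and $\Bk-\Bm$ lies in the reciprocal lattice. The paper packages your Riemann--Lebesgue/Fourier-mode averaging step as its Lemma~\ref{Blem:3.5}, so your mode-by-mode argument is essentially a self-contained re-derivation of that lemma.
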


\begin{proof}
The proof of the theorem is based on Lemma~\ref{Blem:3.5}. We have to show, that the coefficients $d_{jp}^{(l)}$ vanish for $l\neq p,$
if one of the conditions in the theorem is not satisfied.
It is clear, that the integrand of $d_{jp}^{(l)}$ has the form $e^{\pm i(\omega_1-\omega_2)\xi_0}W(\xi')$, thus the integral
over the over the volume of $|Q|$ will vanish by Lemma~\ref{Blem:3.5}, as the coefficient of the exponent $e^{\pm i(\omega_1-\omega_2)\xi_0}$
does not depend on $\xi_0.$ On the other hand, if $\omega_1=\omega_2,$ then $d_{jp}^{(l)}$ will have the form
$e^{\pm i(\Bk-\Bm)\xi'}W(\xi')$, where $W(\xi')$ is a periodic function in $\xi'$ with the cell-period of that of the medium. Therefore,
again, an application of Lemma~\ref{Blem:3.5} completes the proof.
\end{proof}

\begin{Theorem}
\label{BThm:4.2}
Any two different waves $(\Bk,\omega_1)$ and $ (\Bm,\omega_2)$ do not couple.
\end{Theorem}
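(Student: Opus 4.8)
The plan is to obtain Theorem~\ref{BThm:4.2} as an immediate corollary of Theorem~\ref{BThm:4.1} together with the standing non-degeneracy hypothesis on the Bloch spectrum. I would argue by contraposition. Suppose $(\Bk,\omega_1)$ and $(\Bm,\omega_2)$ couple; then Theorem~\ref{BThm:4.1} forces $\omega_1=\omega_2=:\omega$ and $(\Bk-\Bm)\odot\Lambda_{diag}\in 2\pi\mathbb Z^d$. The whole point is to show that these two conditions already pin down $U_0^{(1)}$ and $U_0^{(2)}$ as scalar multiples of one another, so that $(\Bk,\omega_1)$ and $(\Bm,\omega_2)$ are \emph{not} two different waves --- the contradiction we want.

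The first step is to read the arithmetic condition geometrically. Since the periodicity cell is the rectangular prism $\Lambda=\prod_{i=1}^d[0,\lambda_i]$, its reciprocal lattice is $\{2\pi\sum_{i=1}^d n_i\mathbf{e}_i/\lambda_i:\ n_i\in\mathbb Z\}$, where $\mathbf{e}_i$ is the $i$-th coordinate vector; and the requirement $(k_i-m_i)\lambda_i\in 2\pi\mathbb Z$ for every $i$ --- which is precisely $(\Bk-\Bm)\odot\Lambda_{diag}\in 2\pi\mathbb Z^d$ --- says exactly that $\Bk-\Bm$ belongs to this reciprocal lattice. Consequently $\BGx'\mapsto e^{-i(\Bk-\Bm)\cdot\BGx'}$ is cell-periodic, so setting $\widetilde V_0^{(1)}(\BGx'):=V_0^{(1)}(\BGx')\,e^{-i(\Bk-\Bm)\cdot\BGx'}$, which is again cell-periodic, formula (\ref{B2.10}) can be rewritten as
\begin{equation*}
U_0^{(1)}(\BGx)=V_0^{(1)}(\BGx')\,e^{-i(\Bk\cdot\BGx'-\omega\xi_0)}=\widetilde V_0^{(1)}(\BGx')\,e^{-i(\Bm\cdot\BGx'-\omega\xi_0)}.
\end{equation*}
This exhibits $U_0^{(1)}$ as a Bloch mode with wavevector $\Bm$ and frequency $\omega$; feeding it into (\ref{B2.9}), or equivalently comparing with (\ref{B2.11}), shows that $\widetilde V_0^{(1)}$ solves the very same cell eigenvalue problem as $V_0^{(2)}$.

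The second step invokes non-degeneracy: by the hypothesis maintained throughout, the Bloch eigenvalue at $(\Bm,\omega)$ is simple, so its eigenspace is one-dimensional, whence $\widetilde V_0^{(1)}=c\,V_0^{(2)}$ for some $c\in\mathbb C\setminus\{0\}$, and therefore $U_0^{(1)}=c\,U_0^{(2)}$ and $u_0^{(1)}(\BX,\BGx)=c\,f_0^{(1)}(\BX)\,U_0^{(2)}(\BGx)$. At leading order the two ``waves'' are then carried by one and the same Bloch mode, so they are not distinct points of the dispersion diagram, which is the contradiction sought. This also substantiates the wording of the abstract: the off-diagonal coefficients $d_{jp}^{(l)}$ with $l\neq p$ can be nonzero only when the two carrier waves are equivalent Bloch waves.

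I do not expect a genuine obstacle here, since all the analytic work already sits in Lemma~\ref{Blem:3.5} and Theorem~\ref{BThm:4.1}: what remains is the algebraic identification of the condition on $\Bk-\Bm$ with membership in the reciprocal lattice, together with a one-line appeal to simplicity of the Bloch eigenvalue. The only point that wants a little care is fixing the precise meaning of ``two different waves'' --- namely, that the carrier modes $U_0^{(1)}$ and $U_0^{(2)}$ are linearly independent --- so that the contradiction is stated correctly; with that convention the argument closes cleanly.
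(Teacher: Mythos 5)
Your proof is correct and follows essentially the same route as the paper: invoke Theorem~\ref{BThm:4.1} to force $\omega_1=\omega_2$ and $(\Bk-\Bm)\odot\Lambda_{diag}\in 2\pi\mathbb Z^d$, and then conclude that the two carrier Bloch waves are scalar multiples of each other, contradicting distinctness. In fact you spell out more carefully than the paper the two points it leaves implicit --- that the condition on $\Bk-\Bm$ is exactly membership in the reciprocal lattice, and that the final proportionality $U_0^{(1)}=c\,U_0^{(2)}$ rests on the standing non-degeneracy (simplicity) assumption for the Bloch eigenvalue.
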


\begin{proof}
By Theorem~\ref{BThm:4.1}, for the waves $(\Bk,\omega_1)$ and $ (\Bm,\omega_2)$ to couple one must have $\omega_1=\omega_2$ and
$\frac{(\Bk-\Bm)\odot\Lambda_{diag}}{2\pi}\in \mathbb Z^d.$ We can without loss of generality assume, making a change of variables if
necessary, that the cell of periodicity $\Lambda$ of the medium is an $n-$dimensional unit cube, i.e.,
$\lambda_i=1$ for $i=1,2,\dots,d.$ Thus we have $\frac{(\Bk-\Bm)\odot\Lambda_{diag}}{2\pi}=\frac{\Bk-\Bm}{2\pi},$ thus
the condition $\frac{(\Bk-\Bm)\odot\Lambda_{diag}}{2\pi}\in \mathbb Z^d$ yields $k_i-m_i=2\pi q_i$ for $i=1,2,\dots,d$ and some
$q_i\in\mathbb Z.$ The last set of equations and the fact, that the medium cell of periodicity is a unit cube imply that the wave $u_1$ is a
scalar multiple of $u_2,$ which completes the proof.
\end{proof}
\begin{Remark}
\label{Brem:4.3}
For the coefficients $d_{jp}^{(l)}$ one has
$$d_{jp}^{(l)}=0,\quad\text{if}\quad l\neq p,$$
due to the non-coupling of different waves.
\end{Remark}
Combining now the above results with the result in Section~\ref{Bsec:2} we arrive at the following:
\begin{Theorem}
\label{BThm:4.4}
The effective equation associated to (\ref{B2.1}) for the wave $(\Bk,\omega)$ is given by
\begin{equation}
\label{B4.4}
\sum_{j=0}^dd_{j}\frac{\partial f_0(\BX)}{\partial X_j}=0,
\end{equation}
where the coefficients entering this homogenized equation are given by
\begin{equation}
\label{B4.5}
d_{j}=\lim_{Q\to\infty}\sum_{i=0}^d\frac{1}{|Q|}\int_{Q}U_0^{\ast}\left(2C_{ij}(\BGx')
\frac{\partial U_0(\BGx)}{\partial \xi_i}+\frac{\partial C_{ij}(\BGx')}{\partial \xi_i}U_0(\BGx)\right),\quad j=0,1,\dots,d,
\end{equation}
and $U_0$ solves the Bloch equation (\ref{B2.9}).
\end{Theorem}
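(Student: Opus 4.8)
The plan is to obtain Theorem~\ref{BThm:4.4} as essentially a bookkeeping consequence of the two-wave analysis of Section~\ref{Bsec:2} together with the decoupling established in Theorems~\ref{BThm:4.1}--\ref{BThm:4.2}. Recall that the supercell solvability condition in Section~\ref{Bsec:2} produced the coupled first-order system $(\ref{B4.1})$ for the pair of modulating functions $f_0^{(1)},f_0^{(2)}$, with coefficients $d_{jp}^{(l)}$ given by $(\ref{B4.2})$. By Remark~\ref{Brem:4.3} the off-diagonal coefficients vanish, i.e. $d_{jp}^{(l)}=0$ whenever $l\neq p$. Feeding this into $(\ref{B4.1})$, the row $p=1$ involves only $f_0^{(1)}$ and the row $p=2$ involves only $f_0^{(2)}$, so the system decouples into two independent scalar transport equations, one attached to each carrier wave.

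First I would isolate the equation belonging to the wave $(\Bk,\omega)$. Relabelling so that this is wave number $1$ (the labelling is immaterial, and by Theorem~\ref{BThm:4.2} a genuinely different second wave cannot contribute), set $f_0:=f_0^{(1)}$, $U_0:=U_0^{(1)}$ and $d_j:=d_{j1}^{(1)}$. Then the $p=1$ row of $(\ref{B4.1})$ reads exactly $(\ref{B4.4})$. Putting $l=p=1$ in the coefficient formula $(\ref{B4.2})$ and renaming $U_0^{(1)}\mapsto U_0$ gives $(\ref{B4.5})$. Finally $U_0^{(1)}$ solves the Bloch equation $(\ref{B2.9})$ by its very construction in $(\ref{B2.8})$--$(\ref{B2.9})$, hence so does $U_0$; this closes the argument.

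There is essentially no obstacle remaining at this stage: the genuine work sits upstream, in the averaging/solvability step of Section~\ref{Bsec:2}, where one must bound the boundary integrals over $\partial Q$ by $M\,\mathcal{H}^{d-1}(\partial Q)$ using continuity and periodicity and then invoke $\mathcal{H}^{d-1}(\partial Q)/|Q|\to 0$, and in the non-coupling Theorems~\ref{BThm:4.1}--\ref{BThm:4.2}; both are already in hand. As an alternative, self-contained presentation that avoids any appeal to decoupling, I would re-run the derivation of Section~\ref{Bsec:2} with a single wave $u=u^{(1)}$ present from the start: every sum $\sum_{i=1}^{2}$ and $\sum_{l=1}^{2}$ collapses to one term, the zeroth order $(\ref{B2.7})$ forces $u_0=f_0(\BX)U_0(\BGx)$ with $U_0$ solving $(\ref{B2.9})$, and the first-order solvability condition $(\ref{B2.14})$--$(\ref{B2.18})$ over the supercell yields $(\ref{B4.4})$ with coefficients $(\ref{B4.5})$ directly. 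I would likely give this version and then note that its consistency with the two-wave computation is precisely the content of Remark~\ref{Brem:4.3}.
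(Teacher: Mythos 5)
Your proposal is correct and follows exactly the route the paper intends: the paper gives no separate proof of Theorem~\ref{BThm:4.4}, presenting it as the immediate combination of the supercell solvability condition (\ref{B2.19})--(\ref{B2.20}) with the vanishing of the off-diagonal coefficients $d_{jp}^{(l)}$ for $l\neq p$ from Remark~\ref{Brem:4.3}. Your bookkeeping (and the optional single-wave re-derivation) supplies precisely the implied argument, with no gaps.
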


The next theorem gives a simplification of formula (\ref{B4.5}).
\begin{Theorem}
\label{BThm:4.5}
The formula (\ref{B4.5}) can be simplified to
\begin{equation}
\label{B4.6}
d_{j}=\sum_{i=0}^d\frac{1}{|\Lambda|}\int_{\Lambda}U_0^{\ast}\left(2C_{ij}(\BGx')
\frac{\partial U_0(\BGx)}{\partial \xi_i}+\frac{\partial C_{ij}(\BGx')}{\partial \xi_i}U_0(\BGx)\right),\quad j=0,1,\dots,d,
\end{equation}
and $U_0$ solves the Bloch equation (\ref{B2.9}).
\end{Theorem}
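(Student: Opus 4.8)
The plan is to insert the explicit Bloch form of $U_0$ from (\ref{B2.10}) into the integrand of (\ref{B4.5}), to observe that the carrier phase cancels so that the integrand depends neither on $\xi_0$ nor on the choice of supercell, and then to let the supercell average collapse to an average over a single period cell.

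First I would recall that, in the non-degenerate case, $U_0(\BGx)=V_0(\BGx')e^{-i(\Bk\cdot\BGx'-\omega\xi_0)}$ with $V_0$ cell-periodic and solving the Bloch equation (\ref{B2.9}). Differentiating gives $\frac{\partial U_0}{\partial\xi_0}=i\omega\,U_0$ and $\frac{\partial U_0}{\partial\xi_i}=\bigl(\frac{\partial V_0}{\partial\xi_i}-ik_iV_0\bigr)e^{-i(\Bk\cdot\BGx'-\omega\xi_0)}$ for $i=1,\dots,d$. Since $U_0^\ast=V_0^\ast(\BGx')e^{+i(\Bk\cdot\BGx'-\omega\xi_0)}$ and every term inside the bracket of (\ref{B4.5}) carries exactly one factor $e^{-i(\Bk\cdot\BGx'-\omega\xi_0)}$, these phases multiply to $1$. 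Hence the whole integrand
\[
G_j(\BGx):=\sum_{i=0}^dU_0^\ast\Bigl(2C_{ij}(\BGx')\frac{\partial U_0(\BGx)}{\partial\xi_i}+\frac{\partial C_{ij}(\BGx')}{\partial\xi_i}U_0(\BGx)\Bigr)
\]
is a function of $\BGx'$ alone, and it is $\Lambda$-periodic because $V_0$, $\Ba$, $b$ and their first derivatives are. (Under the ellipticity and positivity hypotheses the Bloch eigenfunction inherits the regularity of the coefficients, so $G_j$ is at least continuous, which is all that is needed.)

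Next, since $G_j$ does not depend on $\xi_0$, writing $Q=\prod_{i=0}^d[0,a_i]$ and $Q'=\prod_{i=1}^d[0,a_i]$ the factor $a_0$ cancels between numerator and denominator, so $\frac{1}{|Q|}\int_QG_j\,d\BGx=\frac{1}{|Q'|}\int_{Q'}G_j\,d\BGx'$. It then remains only to pass to the limit $a_i\to\infty$ in the average of the $\Lambda$-periodic function $G_j$ over the box $Q'$. This is the elementary periodic-averaging statement underlying Lemma~\ref{Blem:3.5}: writing $a_i=n_i\lambda_i+r_i$ with $0\le r_i<\lambda_i$, the box $Q'$ decomposes into $\prod_in_i$ translated copies of $\Lambda$, on each of which the integral of $G_j$ equals $\int_\Lambda G_j$, together with a boundary layer of volume $O\bigl(|Q'|/\min_ia_i\bigr)=o(|Q'|)$ on which $|G_j|$ is bounded. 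Dividing by $|Q'|$ and letting all $a_i\to\infty$ yields $\lim_{Q'\to\infty}\frac{1}{|Q'|}\int_{Q'}G_j=\frac{1}{|\Lambda|}\int_\Lambda G_j$. Combined with the cancellation of the $\xi_0$ direction, this turns (\ref{B4.5}) into (\ref{B4.6}); and $U_0$ of course still solves (\ref{B2.9}).

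The main obstacle is bookkeeping rather than substance: one must track the carrier exponential carefully enough to see that it drops out of $G_j$ — so that $G_j$ is genuinely $\BGx'$-periodic and $\xi_0$-independent — and one must invoke the periodic-averaging lemma with the correct boundary-layer estimate. There is no deeper difficulty, since non-degeneracy already guarantees that $U_0$ is well defined up to a multiplicative constant and is as regular as the coefficients allow.
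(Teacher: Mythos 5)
Your proposal is correct and follows essentially the same route as the paper: substitute the Bloch form (\ref{B2.10}) into (\ref{B4.5}), observe that the carrier exponentials cancel so the integrand is a $\xi_0$-independent, $\Lambda$-periodic function of $\BGx'$, let the time integration cancel against $|Q|$, and reduce the spatial supercell average to a single-cell average via periodic averaging (the paper invokes Lemma~\ref{Blem:3.5} with $\lambda=\mathbf{0}$, which is exactly the translated-copies-plus-boundary-layer argument you spell out). No substantive difference.
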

\begin{proof}
The proof is a direct consequence of Lemma~\ref{Blem:3.5}. Recalling the formula (\ref{B2.10}) for the function $U_0(\xi)$ and plugging
in the expression of $U_0(\xi)$ into the formula (\ref{B4.5}) and calculating the partial derivatives, all the exponents cancel out and
one is left with a function $f(\xi')$ integrated over a time-space supercell $Q.$ The integration in time is balanced by the denominator $|Q|$ and thus
we are left with the integral of $f(\xi')$ over a space supercell. Finally, an application of Lemma~\ref{Blem:3.5} with the value $\xi=\mathbf{0}$ completes the proof.
\end{proof}
\section{The case of vector valued waves}
\label{Bsec:5}
\setcounter{equation}{0}

In this section we allow for vector potentials $\Bu$ having $n$ components, and we
consider a system of $n$ equations in $d$ dimensions:
\begin{equation}
\label{B5.1}
\nabla\cdot\left(\mathbf{a}\left(\frac{\mathbf{x}}{\epsilon}\right)\nabla \mathbf{u}\right)=\mathbf{b}\left(\frac{\mathbf{x}}{\epsilon}\right)
\frac{\partial^2 \mathbf{u}}{\partial t^2},\qquad \mathbf{x}=(x_1,x_2,\dots,x_d)\in\mathbb R^d,
\end{equation}
which reads in components as follows:
\begin{equation}
\label{B5.2}
\sum_{j=1}^d \frac{\partial}{\partial x_j}\left(\sum_{k=1}^n\sum_{l=1}^da_{ijkl}\left(\frac{\mathbf{x}}{\epsilon}\right)\frac{\partial u_k}{\partial x_l}\right)=\sum_{k=1}^nb_{ik}\left(\frac{\mathbf{x}}{\epsilon}\right)\frac{\partial^2 u_k}{\partial t^2}
,\quad i=1,2,\dots n\quad\text{and}\quad \mathbf{x}\in\mathbb R^d,
\end{equation}
where $\mathbf{a}$ is a fourth order tensor that has the usual symmetry $a_{ijkl}=a_{klij},$ $\mathbf{b}\in \mathbb{R}^{n\times n}$ is a symmetric matrix, and $\mathbf{u}\colon\mathbb R^d\to\mathbb R^n$ is a vector field. It is assumed that $\mathbf{a}$ and $\mathbf{b}$ are cell-periodic. These
equations appear most naturally in the context of elastodynamics, where $\Bu(\Bx)$ is identified as the displacement field, $\Ba(\Bx)$ as the elasticity
tensor, having the additional symmetries $a_{ijkl}=a_{jikl}=a_{ijlk}$, and $\Bb(\Bx)$ is the (possibly anisotropic) density. The three-dimensional
electromagnetic equations of Maxwell can also
be expressed in this form (\citeAY{Milton:2006:CEM})
with $\Bu(\Bx)$ representing the electric field, $\Bb(\Bx)$ the dielectric tensor,
and the components of $\Ba$ being related to
the magnetic permeability tensor $\BGm(\Bx)$ through the equations,
\beq a_{ijkl}=-e_{ijp}e_{klq}\{\BGm^{-1}\}_{pq}, \eeq{5.2a}
in which $e_{ijp}$ is the completely antisymmetric Levi-Civita tensor, taking values $+1$ or $-1$ according to whether $ijp$ is an even or odd
permutation of $123$ and being zero otherwise.

Like in Section~\ref{Bsec:2} we rewrite system (\ref{B5.2}) in the following form
\begin{align}
\label{B5.3}
&\begin{pmatrix}
\frac{\partial}{\partial t}\\
\nabla
\end{pmatrix}\cdot \mathbf{C}
\begin{pmatrix}
\frac{\partial}{\partial t}\\
\nabla
\end{pmatrix}\mathbf{u}(\mathbf{x},t)=0,\qquad
\text{where}\qquad
\mathbf{C}=(C_{ijkl}),\\ \nonumber
& 1\leq i,k\leq n, \quad 0\leq j,l\leq d.
\end{align}
and the tensor $\mathbf{C}$ derives from the tensor $\mathbf{a}$ and the matrix $\mathbf{b}$ as follows:
\begin{equation}
\label{B5.4}
\begin{cases}
C_{ijkl}=0 & if \quad jl=0, j+l\geq 1,\\
C_{i0k0}=-b_{ik} & if \quad 1\leq i,k\leq n,\\
C_{ijkl}=a_{ijkl} & if \quad 1\leq i,j,k,l.
 \end{cases}
\end{equation}
Remembering that $t=x_0,$ we arrive at
\begin{equation}
\label{B5.5}
\overline{\nabla}\cdot(\mathbf{C}\overline{\nabla}\mathbf{u}(\mathbf{x},t))=0.
\end{equation}
Replacing now $\overline{\nabla}$ with $\overline{\nabla}_X+\frac{1}{\epsilon}\overline{\nabla}_{\xi},$ where $\mathbf{X}=(X_0,X_1,\dots,X_d)$ is the slow variable and $\BGx=(\xi_0,\xi_1,\dots,\xi_d)$ is the fast variable, we arrive at the system of equations
\begin{align}
\label{B5.6}
\overline{\nabla}_{\xi}\cdot(\mathbf{C}(\xi)\overline{\nabla}_{\xi}\mathbf{u}(\BX,\BGx))+
\epsilon\overline{\nabla}_{\xi}\cdot(\mathbf{C}(\xi)\overline{\nabla}_{X}\mathbf{u}(\BX,\BGx))\\ \nonumber
+\epsilon\overline{\nabla}_{X}\cdot(\mathbf{C}(\xi)\overline{\nabla}_{\xi}\mathbf{u}(\BX,\BGx))
+\epsilon^2\overline{\nabla}_{X}\cdot(\mathbf{C}(\xi)\overline{\nabla}_{X}\mathbf{u}(\BX,\BGx))=0.
\end{align}
We adopt the same strategy as in Section~\ref{Bsec:2}, but with a
slight difference: as we already know, that there is no coupling
between two different waves, we seek the solution to (\ref{B5.6}) in
the form of one wave (rather than the two of (\ref{B2.5})) corresponding to the pair $(\Bm,\omega)$ on the dispersion diagram:
\begin{equation}
\label{B5.7}
\mathbf{u}(\BX,\BGx)=\mathbf{u}(\BX,\BGx)
\end{equation}
where the vector $e^{-i(\Bm\cdot\BGx'-\omega\xi_0)}\Bu(\BX,\BGx)$ is periodic in
$\BGx'=(\xi_1,\xi_2,\dots,\xi_d)$ and independent of $\xi_0.$ Next, we assume that the vector $\mathbf{u}$ has the expansion
\begin{equation}
\label{B5.8}
\mathbf{u}(\BX,\BGx)=\mathbf{u}_0(\BX,\BGx)+\epsilon \mathbf{u}_1(\BX,\BGx)
+\epsilon^2 \mathbf{u}_2(\BX,\BGx)+\dots.
\end{equation}
At the zeroth order we get the system
$$\overline{\nabla}_{\xi}\cdot(\mathbf{C}(\BGx)\overline{\nabla}_{\xi}\mathbf{u}_0(\BX,\BGx))=0.$$
This has the solution $\mathbf{u}_0(\BX,\BGx)=f_0(\mathbf{X})\mathbf{U}_0(\BGx)$,
where $f_0$ is a scalar and $\mathbf{U_0}\colon\mathbb{R}^{d+1}\to\mathbb{R}^n$ is a vector such that
$e^{-i(\Bm\cdot\BGx'-\omega\xi_0)}\mathbf{U_0}(\BGx)$ is periodic in
$\BGx'=(\xi_1,\xi_2,\dots,\xi_d)$ and independent of $\xi_0,$ and that the vector $\mathbf{U_0}$ solves the
system of Bloch equations:
\begin{equation}
\label{B5.9}
\overline{\nabla}_{\xi}\cdot(\mathbf{C}(\BGx)\overline{\nabla}_{\xi}\mathbf{U}_0(\BGx))=0.
\end{equation}

At the first order we get the following system
\begin{equation}
\label{B5.10}
\overline{\nabla}_{\xi}\cdot(\mathbf{C}(\BGx)\overline{\nabla}_{\xi}\mathbf{u}_1(\BX,\BGx))=
-\left[\overline{\nabla}_{\xi}\cdot(\mathbf{C}(\BGx)\overline{\nabla}_{X}\mathbf{u}_0(\BX,\BGx))
+\overline{\nabla}_{X}\cdot(\mathbf{C}(\BGx)\overline{\nabla}_{\xi}\mathbf{u}_0(\BX,\BGx))\right].
\end{equation}
We can then calculate
$$\overline{\nabla}_{X}(f_0(\mathbf{X})\mathbf{U}_0(\BGx))=\left(\frac{\partial f_0(\mathbf{X})}{\partial X_l}U_0^{k}(\BGx)\right)_{lk},$$
thus
\begin{align}
\label{B5.11}
\overline{\nabla}_{X}\cdot(\mathbf{C}(\BGx)&\overline{\nabla}_{\xi}\mathbf{u}_0(\mathbf{X,\BGx}))
=\sum_{j=0}^d\frac{\partial}{\partial \xi_j}\left(\sum_{l=0}^d\sum_{k=1}^nC_{ijkl}(\BGx)\frac{\partial f_0(\mathbf{X})}{\partial X_l}U_0^{k}(\BGx)\right)\\ \nonumber
&=\sum_{j,l=0}^d\frac{\partial f_0(\mathbf{X})}{\partial X_l}\sum_{k=1}^n\left(\frac{\partial C_{ijkl}(\BGx)}{\partial \xi_j}U_0^{k}(\BGx)+C_{ijkl}(\BGx)\frac{\partial U_0^{k}(\BGx)}{\partial \xi_j}\right).
\end{align}
We have similarly
\begin{equation}
\label{B5.12}
\overline{\nabla}_{X}\cdot(\mathbf{C}(\BGx)\overline{\nabla}_{\xi}\mathbf{u}_0^(\BX,\BGx))
=\sum_{j,l=0}^d\frac{\partial f_0(\mathbf{X})}{\partial X_l}\sum_{k=1}^nC_{ilkj}(\BGx)\frac{\partial U_0^{k}(\BGx)}{\partial \xi_j},
\end{equation}
thus we finally obtain
\begin{align}
\label{B5.13}
&\overline{\nabla}_{\xi}\cdot(\mathbf{C}(\BGx)\overline{\nabla}_{\xi}\mathbf{u}_1^(\BX,\BGx))\\ \nonumber
&=\sum_{j,l=0}^d\frac{\partial f_0(\mathbf{X})}{\partial X_l}\sum_{k=1}^n
\left(\frac{\partial C_{ijkl}(\BGx)}{\partial_{\xi_j}}U_0^{k}(\BGx)+(C_{ijkl}(\BGx)+C_{ilkj}(\BGx))\frac{\partial U_0^{k}(\BGx)}{\partial \xi_j}\right),
\\ \nonumber
&i=1,2,\dots,n.
\end{align}
Proceeding like in Section~\ref{Bsec:2} we multiply the system (\ref{B5.13}) by the field $\mathbf{U_0^{\ast}}$
and then integrate the obtained identity over the cell $Q$ to eliminate the vector $\mathbf{u_1}$. This gives the
following result:
\begin{Theorem}
\label{BThm:5.1}
By the analogy of Theorem~\ref{BThm:4.5}, the system (\ref{B5.1}) homogenizes to the following equation:
\begin{equation}
\label{B5.14}
\sum_{l=0}^dd_{l}\frac{\partial f_0(\mathbf{X})}{\partial X_l}=0,
\end{equation}
where the coefficients $d_{l}$  entering the homgenized equation are given by the formulae:
\begin{align}
\label{B5.15}
&d_{l}=\frac{1}{|\Lambda|}\int_{\Lambda}\sum_{j=0}^d\sum_{i,k=1}^n
[\frac{\partial C_{ijkl}(\BGx)}{\partial_{\xi_j}}U_0^{k}(\BGx)U_0^{i\ast}(\BGx)+\\ \nonumber
&(C_{ijkl}(\BGx)+C_{ilkj}(\BGx))\frac{\partial U_0^{k}(\BGx)}{\partial \xi_j}U_0^{i\ast}(\BGx)]d\BGx,\\ \nonumber
&\quad l=1,2,\dots,d.
\end{align}
\end{Theorem}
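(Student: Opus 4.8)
The plan is to repeat, in components, the two-step scheme of Section~\ref{Bsec:2} and Section~\ref{Bsec:4}. First I would multiply the system (\ref{B5.13}) by the conjugate field $\mathbf{U}_0^{\ast}$ --- that is, multiply the $i$-th equation by $U_0^{i\ast}$ and sum over $i=1,\dots,n$ --- and integrate the resulting scalar identity over a large rectangular supercell $Q=\prod_{i=0}^d[0,a_i]$ in the $\BGx$ variable. The right-hand side then produces exactly the bracket appearing in (\ref{B5.15}), multiplied by $\partial f_0/\partial X_l$ and averaged over $Q$, so the whole proof reduces to showing that the left-hand side average tends to zero as $Q\to\infty$ and that the right-hand side average converges.

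For the left-hand side I would integrate by parts twice, exactly as in (\ref{B2.16})--(\ref{B2.18}): one integration by parts moves a $\overline{\nabla}_\xi$ off $\mathbf{u}_1$, and then, using the complex conjugate of the Bloch system (\ref{B5.9}), namely $\overline{\nabla}_{\xi}\cdot(\mathbf{C}(\BGx)\overline{\nabla}_{\xi}\mathbf{U}_0^{\ast})=0$, a second integration by parts converts the remaining volume integral into a boundary integral over $\partial Q$. The cancellation of the two volume terms uses precisely the symmetry $a_{ijkl}=a_{klij}$ (equivalently $C_{ijkl}=C_{klij}$), which makes $\mathbf{C}$ formally self-adjoint; this is the one place where the vector-valued computation genuinely differs from the scalar one, and where I expect the componentwise bookkeeping to be the main (if essentially routine) obstacle. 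Since $U_0$ and $\mathbf{u}_1$ are Bloch fields with the same wavevector $\Bm$ and frequency $\omega$ (inherited from the ansatz (\ref{B5.7})--(\ref{B5.8})), the moduli $|U_0^{i}|$, $|\partial_{\xi_j}U_0^{i}|$, $|u_1^{i}|$, $|\partial_{\xi_j}u_1^{i}|$ and the entries of $\mathbf{C}$ are all cell-periodic and continuous, hence bounded; therefore the boundary integral is $\le M\,\mathcal H^{d-1}(\partial Q)$ for some constant $M$, and dividing by $|Q|$ and letting $a_0,\dots,a_d\to\infty$ kills it because $\mathcal H^{d-1}(\partial Q)/|Q|\to0$. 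This is the solvability condition over the supercell and it yields (\ref{B5.14}) with $d_l$ equal to the $Q\to\infty$ limit of the supercell averages on the right of (\ref{B5.13}).

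It remains to simplify those supercell averages to the single-cell integrals (\ref{B5.15}), and here I would argue as in the proof of Theorem~\ref{BThm:4.5}. Writing $\mathbf{U}_0(\BGx)=\mathbf{V}_0(\BGx')e^{-i(\Bm\cdot\BGx'-\omega\xi_0)}$, every term of the integrand has the form $U_0^{i\ast}(\cdots)U_0^{k}$ or $U_0^{i\ast}(\cdots)\partial_{\xi_j}U_0^{k}$; computing the derivative gives $\partial_{\xi_j}U_0^k=(\partial_{\xi_j}V_0^k-im_jV_0^k)e^{-i(\Bm\cdot\BGx'-\omega\xi_0)}$ for $j\ge1$ and $i\omega V_0^k e^{-i(\Bm\cdot\BGx'-\omega\xi_0)}$ for $j=0$, so in every product the exponential cancels against the one in $U_0^{i\ast}$ and one is left with a fixed $\Lambda$-periodic function $F(\BGx')$, independent of $\xi_0$. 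The $\xi_0$-integration over $[0,a_0]$ then contributes a factor $a_0$ that cancels the corresponding factor in $|Q|$, leaving $\frac{1}{|Q'|}\int_{Q'}F(\BGx')\,d\BGx'$ over the spatial supercell $Q'$. Applying Lemma~\ref{Blem:3.5} with trivial exponent (i.e.\ $\xi=\mathbf 0$) identifies this limit with $\frac{1}{|\Lambda|}\int_{\Lambda}F(\BGx')\,d\BGx'$, which is exactly (\ref{B5.15}). The only genuine subtlety, as noted, is verifying the self-adjointness cancellation in the double integration by parts; everything else is a componentwise transcription of the scalar argument.
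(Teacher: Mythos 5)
Your proposal is correct and follows essentially the same route as the paper, which itself only sketches the argument by appeal to the analogy with Section~\ref{Bsec:2} and Theorem~\ref{BThm:4.5}: multiply (\ref{B5.13}) by $\mathbf{U}_0^{\ast}$, integrate over a supercell, eliminate $\mathbf{u}_1$ via the conjugated Bloch system and the boundary-to-volume ratio $\mathcal H^{d-1}(\partial Q)/|Q|\to 0$, then reduce to a unit-cell average with Lemma~\ref{Blem:3.5}. Your explicit identification of the symmetry $C_{ijkl}=C_{klij}$ as the ingredient making the double integration by parts work is a useful point the paper leaves implicit, but it is not a different method.
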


\section{A general case applicable to the Schr{\"o}dinger equation}
\label{Bsec:6}
\setcounter{equation}{0}
Let $\Bx=(x_0,x_1,\dots,x_d)$ and $\Bx'=(x_1,x_2,\dots,x_d).$ Here $x_0$ could represent the time, and $\Bx'$ the remaining spatial coordinates.
We aim to homogenize the problem
\begin{equation}
\label{B6.1}
\begin{pmatrix}
\BG\\
\nabla \cdot \BG\\
\end{pmatrix}=
\BL_\epsilon\left(\frac{\Bx'}{\epsilon}\right)
\begin{pmatrix}
\nabla u\\
u\\
\end{pmatrix},
\end{equation}
where $u(\Bx)\colon\mathbb R^{d+1}\to \mathbb R$ is the unknown,
$\BL_\epsilon(\Bx')\colon\mathbb R^d\to\mathbb R^{(d+2)(d+2)}$ is a Hermitian matrix that is cell-periodic in $\Bx'\in\mathbb R^{d}$.

To see the connection with the Schr{\"o}dinger equation, we let $\psi(\Bx)$ denote the wavefunction,
where $\Bx=(x_0,\Bx')$ and $x_0=t$ denotes the time coordinate while $\Bx'$ denotes the spatial coordinate, $V(\Bx')$ denote
the time independent electrical potential, $\BGF(\Bx')=(\GF_1(\Bx'),\GF_2(\Bx'),\GF_3(\Bx'))$ denote the time independent magnetic potential,
with $\Bb=\Curl\BGF$ the magnetic induction, $e$ denote the charge on the electron, and $m$ denote its mass. Using the Lorentz gauge, and
noting that $V(\Bx')$ is independent of time, $\BGF(\Bx')$ can be taken to have zero divergence.
Lets also choose units so that $\hbar$,
which is Planck's constant divided by $2\pi$, has the value 1. We assume both $V(\Bx')$ and $\BGF(\Bx')$ are periodic functions of $\Bx'$
with the same unit cell. Following \citeAPY{Milton:2016:ETC},
the Schr{\"o}dinger equation in a magnetic field can be written in the form
\beq
\begin{pmatrix} q_t\\
\Bq_x\\
\frac{\partial q_t}{\partial t}+\nabla'\cdot\Bq_x
\end{pmatrix}
=
\begin{pmatrix}
0 & 0 & -\frac{i}{2} \\
0 & \frac{-\BI}{2m} & \frac{i e \BGF}{2m} \\
+\frac{i}{2}  &\frac{-i e \BGF}{2m} & -e V
\end{pmatrix}
\begin{pmatrix}\frac{\partial\Gy}{\partial t}\\
\nabla'\Gy\\
\Gy
\end{pmatrix},
\eeq{A0.63}
where $q_t(t,\Bx)$ is a scalar field and $\Bq_x(t, \Bx')$ is a vector field, and where $\nabla'$, and $\nabla'\cdot$
are the gradient and divergence with respect to $\Bx'$.
Expanding out this in matrix form gives
\beqa
q_t &=& -\frac{i}{2}\Gy, \nonum
\Bq_x &=& -\frac{1}{2m}\nabla'\Gy +\frac{i e \BGF}{2m}\Gy,\nonum
\frac{\partial q_t}{\partial t}+\nabla'\cdot \Bq_x &=&
-\frac{(\nabla')^2\Gy}{2m}
+\frac{i\nabla'\cdot(e\BGF\Gy)}{2m}-\frac{i}{2}\frac{\partial\Gy}{\partial
t}= -\frac{i e \BGF}{2m}\nabla'\Gy+\frac{i}{2}\frac{\partial \Gy}{\partial
t}-e V \Gy,
\eeqa{A0.64}
where $(\nabla')^2$ is the Laplacian with respect to $\Bx'$.
Upon eliminating $\Bq_x$ and $q_t$ these imply the familiar form for Schr\"odinger's equation in a magnetic field:
\beq
i\frac{\partial \Gy}{\partial t} =
\frac{1}{2m}[i\nabla'+e\BGF]^2\Gy+e V \Gy.
\eeq{A0.65}
Setting $\BG=(q_t, \Bq_x)$, $\nabla=(\Md/\Md t,\nabla')$, and $u=\psi$ we see that Schr\"odinger's equation in a magnetic field
can be expressed in the form
\begin{equation}
\begin{pmatrix}
\BG\\
\nabla \cdot \BG\\
\end{pmatrix}=
\BL(\Bx')
\begin{pmatrix}
\nabla u\\
u\\
\end{pmatrix},\quad {\rm with}\quad ~\BL(\Bx')=\begin{pmatrix}
\Ba & \Bb(\Bx') \\
(\Bb(\Bx'))^{T\ast} & c(\Bx') \\
\end{pmatrix},
\end{equation}
where
\beq \Ba=\begin{pmatrix}
0 & 0 \\
0 &  \frac{-\BI}{2m}
\end{pmatrix},\quad \Bb(\Bx')=\begin{pmatrix}-\frac{i}{2} \\ \frac{i e \BGF(\Bx')}{2m}\end{pmatrix},\quad c(\Bx')=-e V(\Bx').
\end{equation}
With appropriate scaling, this is of the form (\ref{B6.1}).

The equation (\ref{B6.1}) will be called a constitutive relation, as it relates $u$ and its gradient $\nabla u$, to $\BG$ and
its divergence $\Div\BG$ through the matrix $\BL_\epsilon$. Let $\BX=(X_0,X_1,\dots,X_d)$ be the slow variable
and let $\BGx=\frac{\BX}{\epsilon}$ be the fast variable. Denote furthermore $\BGx'=(\Gx_1,\Gx_2,\dots,\Gx_d)$. We assume that the matrix $\BL_\epsilon$ has the form
\begin{equation*}
\BL_\epsilon=\begin{pmatrix}
\Ba(\BGx') & \Bb(\BGx')/\Ge\\
(\Bb(\BGx'))^{T\ast}/\Ge & c(\BGx')/\Ge^2\\
\end{pmatrix},
\end{equation*}
where we assume that $\Ba\in\mathbb R^{(d+1)\times (d+1)}$ is a real symmetric matrix, $\Bb\in\mathbb C^{(d+1)\times1}$ is a complex divergence
free field and $c\in\mathbb R$ is a real function. With our choice of the Lorentz gauge, $\Bb(\Bx)$ is divergence free for the
Schr{\"o}dinger equation in a magnetic field.

Next we expand $u$ and $\BG$ in powers of $\epsilon:$
\begin{align}
\label{B6.2}
\BG&=\BG_0+\epsilon \BG_1+\epsilon^2\BG_2+\dots\\ \nonumber
u&=u_0+\epsilon u_1+\epsilon^2u_2+\dots.
\end{align}
After replacing $\nabla$ by $\nabla_X+\frac{1}{\epsilon}\nabla_\xi$ and equating the coefficients
of the same power of $\epsilon$ on both sides of (\ref{B6.1}) we obtain the following equations in orders of $\Ge^{-1}$ and $\Ge^0$ respectively:\\
\begin{itemize}
\item{}[Order $\Ge^{-1}$].
\begin{align*}
\BG_0(\BX,\BGx)&=\Ba(\BGx')\nabla_\xi u_0(\BX,\BGx)+\Bb(\BGx')u_0(\BX,\BGx)\\
\nabla_\xi\cdot \BG_0&=c(\BGx')u_0(\BX,\BGx)+(\Bb(\BGx'))^\ast\nabla_\xi u_0(\BX,\BGx)
\end{align*}
from which we get the Bloch equation for $u_0:$
\begin{equation}
\label{B6.3}
\nabla_\xi\cdot(\Ba(\BGx')\nabla_\xi u_0)+(\Bb(\BGx')-(\Bb(\BGx'))^\ast)\cdot\nabla_\xi u_0-c(\BGx')u_0=0.
\end{equation}

\item{}[Order $\Ge^0$]. In the zeroth order we get the following system
\begin{align*}
\BG_1(\BX,\BGx)&=\Ba(\BGx')(\nabla_X u_0(\BX,\BGx)+\nabla_\xi u_1(\BX,\BGx))+\Bb(\BGx')u_1(\BX,\BGx)\\
\nabla_x\cdot \BG_0+\nabla_\xi\cdot \BG_1&=(\Bb(\BGx'))^\ast(\nabla_x u_0(\BX,\BGx)+\nabla_\xi u_1(\BX,\BGx))+c(\BGx')u_1(\BX,\BGx),
\end{align*}
from where we get by eliminating $\BG_0$ and $\BG_1,$
\begin{align}
\label{B6.4}
\nabla_\xi\cdot&(\Ba(\BGx')\nabla_\xi u_1)+(\Bb(\BGx')-(\Bb(\BGx'))^\ast)\cdot\nabla_\xi u_1-c(\BGx')u_1\\ \nonumber
&=-\nabla_X\cdot(\Ba(\BGx')\nabla_\xi u_0)-\nabla_\xi\cdot(\Ba(\BGx')\nabla_x u_0)-\nabla_X\cdot (\Bb(\BGx')u_0)+(\Bb(\BGx'))^\ast\cdot\nabla_X u_0.
\end{align}
Next we assume, that $u_j(\BX,\BGx)$ is such, that the functions
$e^{i(k\BGx'-\omega\xi_0)}u_j(\BX,\BGx)$ are periodic in
$\BGx'$ and do not depend on $\xi_0.$ We assume furthermore, that $u_0(\BX,\BGx)$ solves the Bloch
equation (\ref{B6.3}) and thus is separable in the fast and slow variables, namely we get
\begin{equation}
\label{B6.5}
u_0(\BX,\BGx)=U_0(\BGx')f_0(\BX).
\end{equation}
We have that
\begin{align*}
&-\nabla_x\cdot(\Ba(\BGx')\nabla_\xi u_0)-\nabla_\xi\cdot(\Ba(\BGx')\nabla_x u_0)-\nabla_X\cdot (\Bb(\BGx')u_0)+(\Bb(\BGx'))^\ast\cdot\nabla_X u_0=
\\ \nonumber
&=-\sum_{i,j=0}^d\frac{\partial f_0(\BX)}{\partial X_j}\left(2C_{ij}(\BGx')\frac{\partial U_0(\BGx)}{\partial \xi_i}
+\frac{\partial C_{ij}(\BGx')}{\partial \xi_i}U_0(\BGx)-(b_j(\xi')+(b_j(\xi'))^\ast)U_0\right),
\end{align*}
Thus we get combining with (\ref{B6.5}),
\begin{align}
\label{B6.6}
&\nabla_\xi\cdot(\Ba(\BGx')\nabla_\xi u_1)+(\Bb(\BGx')-(\Bb(\BGx'))^\ast)\cdot\nabla_\xi u_1-c(\BGx')u_1\\ \nonumber
&=-\sum_{i,j=0}^d\frac{\partial f_0(\BX)}{\partial X_j}\left(2C_{ij}(\BGx')\frac{\partial U_0(\BGx)}{\partial \xi_i}
+\frac{\partial C_{ij}(\BGx')}{\partial \xi_i}U_0(\BGx)-(b_j(\xi')+(b_j(\xi'))^\ast)U_0\right),
\end{align}
Next we multiply the equation (\ref{B6.6}) by $ U_0^\ast$ and integrate over $\overline{Q}$ to
eliminate $u_1$ and obtain the effective equation. We proceed by the analogy of (\ref{B2.14})-(\ref{B2.20}).
First, by taking the complex conjugate of the Bloch equation (\ref{B6.3}) we get
\begin{equation}
\label{B6.7}
\nabla_\xi\cdot(\Ba(\BGx')\nabla_\xi U_0^{\ast})-(\Bb(\BGx')-(\Bb(\BGx'))^\ast)\cdot\nabla_\xi U_0^{\ast}-c(\BGx')U_0^{\ast}=0,
\end{equation}
thus by multiplying equation (\ref{B6.7}) by $u_1$ and integrating over a rectangle $Q$
by parts and using the divergence-free property of $\Bb,$ we get
\begin{align}
\label{B6.8}
&0=\\ \nonumber
&\int_Qu_1(\nabla_\xi\cdot (\Ba(\BGx')\nabla_\xi U_0^{\ast})-(\Bb(\BGx')-(\Bb(\BGx'))^\ast)\cdot\nabla_\xi U_0^{\ast}-c(\BGx'))U_0^{\ast})d\BGx=\\ \nonumber
&\int_QU_0^{\ast}\left(\nabla_\xi\cdot(\Ba(\BGx')\nabla_\xi u_1)+U_0^{\ast}(\Bb(\BGx')-(\Bb(\BGx'))^\ast)\cdot\nabla_\xi u_1-c(\BGx')u_1\right)d\BGx
\\ \nonumber
&+\mathrm{surface\ \  term,}
\end{align}
thus by the analogy of (\ref{B2.14})-(\ref{B2.20}) we get
\begin{equation}
\label{B6.9}
\lim_{Q\to\infty}\frac{1}{|Q|}\int_QU_0^{\ast}\left(\nabla_\xi\cdot(\Ba(\BGx')\nabla_\xi u_1)+U_0^{\ast}(\Bb(\BGx')-(\Bb(\BGx'))^\ast)\cdot\nabla_\xi u_1-c(\BGx')u_1\right)d\BGx=0.
\end{equation}
Finally, combining (\ref{B6.9}) and (\ref{B6.6}) we arrive at the effective equation
\begin{equation}
\label{B6.10}
\sum_{j=0}^dd_{j}\frac{\partial f_0(\BX)}{\partial X_j}=0,
\end{equation}
where by the analogy of Theorem~\ref{BThm:4.5}, one has
\begin{equation}
\label{B6.11}
d_{j}=\sum_{i=0}^d\frac{1}{|\Lambda|}\int_{\Lambda}U_0^{\ast}\left(2C_{ij}(\BGx')
\frac{\partial U_0(\BGx)}{\partial \xi_i}+\frac{\partial C_{ij}(\BGx')}{\partial \xi_i}U_0(\BGx)-(b_j(\xi')+(b_j(\xi'))^\ast)U_0\right)d\BGx.
\end{equation}
\end{itemize}

\section{Simplifying the effective equation}
\label{Bsec:7}
\setcounter{equation}{0}

In this section we relate the dispersion relation $\omega=g(\Bk)$ and the effective coefficients.\\
\textbf{[The scalar case].}
Assume we have the effective equation (\ref{B4.4}) for a single wave $(\Bk,\omega).$ Identifying $X_0,X_1,\ldots X_d$
with $t,x_1,\ldots x_d$ we can rewrite it in the following way:
\begin{equation}
\label{B7.1}
\Ba_1\cdot\nabla f_0(t,\Bx)+b_1\frac{\partial f_0(t,\Bx)}{\partial t}=0.
\end{equation}
Assume $\epsilon>0$ is small enough, and suppose the pair $(\Bk+\epsilon\Gd \Bk,\Go+\Ge\Gd\omega)$
also lies on the dispersion relation. Since
$g(\Bk+\epsilon\Gd \Bk)= g(\Bk)+\epsilon\Gd \Bk\cdot\nabla g(\Bk)+\mathcal{O}(\epsilon^2),$
we have $\Gd\omega= \Gd \Bk\cdot\nabla g(\Bk)+\mathcal{O}(\Ge).$ We know one
solution of the wave equation is the Bloch solution
\beq u(\Bx,t)=e^{i[(\Bk+\Ge\Gd \Bk)\cdot(\Bx/\Ge)-(\omega\Ge\Gd\Go)(t/\Ge)]}V_\Ge(\Bx/\Ge),
\eeq{B7.0}
where with $\Bx/\Ge=\BGx'$,  $V_\Ge(\BGx')$ satisfies the Bloch equations
\beq (\omega+\Ge\Gd\Go)^2b(\BGx')V_\Ge(\BGx')
+(-i(\Bk+\Ge\Gd\Bk)+\overline{\nabla}_{\xi'})\cdot \mathbf{a}(\BGx')(-i(\Bk+\Ge\Gd\Bk)+\overline{\nabla}_{\xi'})V_\Ge(\BGx')=0
\eeq{B7.0a}
and $V_\Ge(\BGx)$ is periodic in $\BGx$.
With appropriate normalizations to ensure this has a unique solution for $V_\Ge(\BGx')$, we can write
\beq V_\Ge(\BGx')=V_0(\BGx')+\frac{\Md V_\Ge(\BGx')}{\Md \Ge}\Bigg|_{\Ge=0}\Ge+\mathcal{O}(\Ge^2).
\eeq{B7.0b}
So \eq{B7.0} has the expansion
\beq u(\Bx,t)=f(t,\Bx)U_0(\BGx')+\mathcal{O}(\Ge),\quad {\rm with}~~f(t,\Bx)=e^{i(\Gd \Bk\cdot\Bx-\Gd\omega t)}.
\eeq{B7.0c}

Then it is clear, that the function $f_0=e^{i(\Gd \Bk\cdot\Bx-\Gd\omega t)}$ must solve the equation (\ref{B7.1}),
from which we get
\begin{align*}
i(\Ba_1\cdot \Gd \Bk-b_1\nabla g(\Bk)\cdot \Gd \Bk)&=0,\quad \text{for all}\quad \Gd \Bk\in\mathbb R^d,\\ \nonumber
\end{align*}
from where we get
\begin{equation}
\label{B7.2}
\Ba_1=b_1\nabla g(\Bk).
\end{equation}
Thus the effective equation becomes
\beqa
&\nabla g\cdot\nabla f_0(t,\Bx)+\frac{\partial f_0(t,\Bx)}{\partial t}
=0,\\ \nonumber
\eeqa{B7.3}
Note that the solution of this equation is the travelling wave packet
$$f_0(t,\Bx)=h(\Bv\cdot\Bx-t)),$$
where $h$ is an arbitrary function that has first partial derivatives, and $\Bv$ is the group velocity which satisfies
$\Bv\cdot\nabla g=1$. As mentioned in the introduction this effective equation fails to capture dispersion
which is captured in the approach of \citeAPY{Allaire:2011:DGO}.

\textbf{[The vector case].} As the effective equations (\ref{B5.14}) in the vector case are exactly the same as in the scalar case, then we get the
 same relation as in the scalar case.

\section*{Acknowledgements}
G.W. Milton and D. Harutyunyan
are grateful to the University of Utah and to the National Science Foundation for support through grant DMS-1211359. G.W. Milton is grateful to Kirill Cherednichenko for explaining to him the results of \citeAPY{Birman:2006:HMP}, as summarized in the appendix. R.V. Craster thanks the EPSRC (UK) for their support through the Programme Grant EP/L024926/1.
We are grateful to Alexander Movchan and Stewart Haslinger for useful conversations about multipole techniques.

\section*{Author contributions}
All of the authors have provided substantial contributions to the conception and
design of the model, interpretation of the results, and writing the article.
All authors have given their final approval of the version to be published.

\section*{Conflict of interests} The authors of the paper have no competing interests.

\section*{Data accessibility} This paper has no data.

\section*{Funding statement} G.W. Milton and D. Harutyunyan are grateful to the National Science Foundation for support through grant DMS-1211359. R.V. Craster thanks the EPSRC (UK) for their support through the Programme Grant EP/L024926/1.

\section*{Ethics statement} This paper does not involve any collection of human data.

\section{Appendix A}

Here we make the connection between the results of \citeAPY{Birman:2006:HMP} and those of \citeAPY{Craster:2010:HFH}. The first thing that is relevant is equation (1.12) of
\citeAPY{Birman:2006:HMP}, where they expand at the edge $E_s$ of a band-gap (where $E_s$ may represent an energy, or frequency) a minimum or maximum of the dispersion diagram as a quadratic form, involving quadratic functions $b^{(\pm)}$. These quadratic
functions determine the "effective coefficients" that enter the homogenized equations of  \citeAPY{Craster:2010:HFH}. In that formula (1.12) the $\xi^{(\pm)}$ is the wave vector $\Bk=\xi$, one
expands around. (They assume there may be $j=1,2,\ldots m_{\pm}$ such wavevectors attaning the same energy  $E_s$, but here, for simplicity, we assume there is just one.) The $\psi_{s\pm}(\Bx,\BGx)$ at the top of page 3685 is the eigenfunction, or Bloch function, associated with $E_s$ . The main result is that the resolvent (2.1) approaches (2.2).The connection is clearer if one writes out what this means. Let us suppose there is a source term $g(\Bx)$. Then
if you are interested in solving $[A-(\Gl-\Ge^2\kappa^2)]u=g$, where $\kappa$ is chosen so $(\Gl-\kappa^2)$ is in the gap, and $\Ge\in (0,1]$,
the solution is $u=S(\Ge)g$, where $S(\Ge)$ is the resolvent.  Birman and Suslina say that when $\Ge$ is small, the result is approximately the same as solving $u=S^0(\Ge) g$, i.e.
\beq [b_j(D)+\Ge^2](u/\psi_{s\pm})=(g/\psi_{s\pm})
\eeq{B.8.0}
Here $u/\psi_{s\pm}$ can be identified with the modulating function $f$ of  \citeAPY{Craster:2010:HFH}, $b_j(D)$ is the effective operator, $D$ is the operator $-i\Grad$ (see
point 3 in the introduction). Thus the analysis of  \citeAPY{Birman:2006:HMP} applies even when there are source terms $g\ne 0$ and allows for expansion points $\xi^{(\pm)}$
which are  not necessarily at $\Bk=\xi^{(\pm)}=0$ or at the edge of the Brillouin zone. The reason  \citeAPY{Birman:2006:HMP} assume one is in the gap is to make sure the solution is localized, which is easier for the mathematical analysis.

\section{Appendix B}
\label{Bsec:3}

\begin{Definition}
\label{Bdef:3.1}
Assume $Q=\prod_{i=1}^d[a_i,b_i]\subset\mathbb R^d$ is a rectangle. Then we write $Q\to\infty$ if
$b_i-a_i\to\infty$ for all $i\in\{1,2,\dots,d\}.$
\end{Definition}

The next two lemmas will be crucial in the process of homogenization.
\begin{Lemma}
\label{Blem:3.2}
Assume $f\colon\mathbb R\to \mathbb R$ is periodic with a period $T>0$ and $f\in L^2(0,T).$
Then for any $b\neq 0$ there holds:
\begin{align}
\label{B3.1}
&\lim_{a\to\infty}\frac{1}{a}\int_{0}^af(x)e^{ibx}dx=0,\quad \text{if} \quad \frac{Tb}{2\pi}\notin\mathbb Z,\\ \nonumber
&\lim_{a\to\infty}\frac{1}{a}\int_{0}^af(x)e^{ibx}dx=\frac{1}{T}\int_{0}^Tf(x)e^{ibx}dx,\quad \text{if} \quad \frac{Tb}{2\pi}\in\mathbb Z.
\end{align}
\end{Lemma}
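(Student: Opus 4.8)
The plan is to prove Lemma~\ref{Blem:3.2} by expanding $f$ in its Fourier series and integrating term by term, reducing everything to the elementary evaluation of $\frac{1}{a}\int_0^a e^{i\gamma x}\,dx$ as $a\to\infty$. First I would write, for $f\in L^2(0,T)$ periodic with period $T$,
\[
f(x)=\sum_{n\in\mathbb Z}c_n e^{2\pi i n x/T},\qquad c_n=\frac{1}{T}\int_0^T f(x)e^{-2\pi i n x/T}\,dx,
\]
with $\sum_n|c_n|^2<\infty$. Then formally
\[
\frac{1}{a}\int_0^a f(x)e^{ibx}\,dx=\sum_{n\in\mathbb Z}c_n\cdot\frac{1}{a}\int_0^a e^{i(b+2\pi n/T)x}\,dx.
\]
For each fixed $n$, if $b+2\pi n/T\neq 0$ then $\frac{1}{a}\int_0^a e^{i(b+2\pi n/T)x}\,dx=\frac{e^{i(b+2\pi n/T)a}-1}{i(b+2\pi n/T)a}\to 0$ as $a\to\infty$, while if $b+2\pi n/T=0$ the integrand is $1$ and the average is exactly $1$. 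The condition $b+2\pi n/T=0$ for some $n\in\mathbb Z$ is precisely $\frac{Tb}{2\pi}=-n\in\mathbb Z$; and since $b\neq 0$, at most one such $n$ exists. Hence in the case $\frac{Tb}{2\pi}\notin\mathbb Z$ every term tends to $0$, giving the first limit; in the case $\frac{Tb}{2\pi}\in\mathbb Z$, say $\frac{Tb}{2\pi}=-n_0$, the $n_0$ term contributes $c_{n_0}=\frac{1}{T}\int_0^T f(x)e^{-2\pi i n_0 x/T}\,dx=\frac{1}{T}\int_0^T f(x)e^{ibx}\,dx$ and all others vanish, giving the second limit.

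The main obstacle is justifying the interchange of the limit $a\to\infty$ with the infinite sum over $n$, since $L^2$ convergence of the Fourier series does not immediately license term-by-term passage to the limit in this averaged integral. I would handle this by a standard $\epsilon/3$ splitting: given $\delta>0$, choose $N$ so that $\|f-f_N\|_{L^2(0,T)}<\delta$ where $f_N=\sum_{|n|\le N}c_n e^{2\pi i n x/T}$ is the truncated series. By periodicity and Cauchy–Schwarz, $\frac{1}{a}\int_0^a|f-f_N|\,dx\le\bigl(\frac{1}{a}\int_0^a|f-f_N|^2\,dx\bigr)^{1/2}\le C\|f-f_N\|_{L^2(0,T)}<C\delta$ uniformly in $a$ (the constant coming from comparing the average over $[0,a]$ with the average over one period, up to the $O(1/a)$ boundary contribution). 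The finite trigonometric sum $f_N$ is handled exactly by the elementary computation above. Combining, the $\limsup_{a\to\infty}$ of the difference between $\frac{1}{a}\int_0^a f e^{ibx}\,dx$ and the claimed limit is bounded by $C\delta$ for every $\delta>0$, hence is $0$.

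A cleaner alternative I would mention, avoiding any series manipulation: substitute $x=y+kT$ and group the integral over $[0,a]$ into $\lfloor a/T\rfloor$ full periods plus a remainder of length $<T$. On the $k$-th period, $\int_{kT}^{(k+1)T} f(x)e^{ibx}\,dx=e^{ibkT}\int_0^T f(y)e^{iby}\,dy$ by periodicity of $f$. Writing $I:=\int_0^T f(y)e^{iby}\,dy$ and $M:=\lfloor a/T\rfloor$, the main part is $I\sum_{k=0}^{M-1}e^{ibkT}$, the remainder term is $O(1)$ in $a$ (bounded using $f\in L^2\subset L^1_{\mathrm{loc}}$ and Cauchy–Schwarz), and $\frac1a$ times it tends to $0$. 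If $e^{ibT}\neq 1$, i.e. $\frac{Tb}{2\pi}\notin\mathbb Z$, the geometric sum $\sum_{k=0}^{M-1}e^{ibkT}=\frac{e^{ibTM}-1}{e^{ibT}-1}$ is bounded, so $\frac{1}{a}$ times it $\to 0$, yielding the first assertion. If $e^{ibT}=1$, i.e. $\frac{Tb}{2\pi}\in\mathbb Z$, the geometric sum equals $M$, and $\frac{M}{a}\to\frac{1}{T}$, which together with $\frac{1}{a}\cdot(\text{remainder})\to 0$ gives $\lim_{a\to\infty}\frac{1}{a}\int_0^a f(x)e^{ibx}\,dx=\frac{1}{T}I=\frac{1}{T}\int_0^T f(x)e^{ibx}\,dx$, the second assertion. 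This second route is essentially the Fej\'er/Weyl equidistribution argument and needs no control of Fourier coefficients at all, so it is the one I would ultimately favor for the writeup.
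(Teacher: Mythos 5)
Your preferred route (the ``cleaner alternative'') is exactly the paper's proof: write $a=mT+r$, group $[0,a]$ into $m$ full periods plus a remainder of length $<T$, use periodicity to reduce each period to $e^{ibTj}\int_0^T f(x)e^{ibx}\,dx$, bound the remainder by $\sqrt{T}\|f\|_{L^2(0,T)}/a$ via Cauchy--Schwarz, and evaluate the geometric sum in the two cases $e^{ibT}\neq 1$ and $e^{ibT}=1$. Your first (Fourier-truncation) route is also correct as written and is, incidentally, the strategy the paper itself deploys later in the proof of Lemma~\ref{Blem:3.3}, where the general case is reduced to Lemma~\ref{Blem:3.2} by exactly such an $L^2$ truncation argument.
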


\begin{proof}

Note, that if $a=mT+r,$ where $0\leq r<T$ and $m\in\mathbb Z, m\geq 0,$ then we have
\begin{align}
\label{B3.2}
\frac{1}{a}\int_{0}^af(x)e^{ibx}dx&=\frac{1}{a}\int_{0}^{mT+r}f(x)e^{ibx}dx\\ \nonumber
&=\frac{1}{a}\int_{mT}^{mT+r}f(x)e^{ibx}dx+\frac{1}{mT+r}\sum_{j=0}^{m-1}\int_{jT}^{(j+1)T}f(x)e^{ibx}dx\\ \nonumber
&=\frac{1}{a}\int_{mT}^{mT+r}f(x)e^{ibx}dx+\frac{1}{mT+r}\sum_{j=0}^{m-1}\int_{0}^{T}f(x)e^{ib(x+jT)}dx\\ \nonumber
&=\frac{1}{a}\int_{mT}^{mT+r}f(x)e^{ibx}dx+\frac{1}{mT+r}\sum_{j=0}^{m-1}e^{ibTj}\int_{0}^{T}f(x)e^{ibx}dx.
\end{align}
We have by the Schwartz inequality, that
\begin{equation}
\label{B3.3}
\left|\frac{1}{a}\int_{mT}^{mT+r}f(x)e^{ibx}dx\right|\leq \frac{1}{a}\int_{0}^{T}|f(x)|dx\leq \frac{\sqrt{T}}{a}\|f\|_{L^2(0,T)}\to 0,\quad\text{as}\quad a\to\infty.
\end{equation}
On the other hand we have
\begin{align}
\label{B3.4}
&\frac{1}{mT+r}\sum_{j=0}^{m-1}e^{ibTj}\int_{0}^{T}f(x)e^{ibx}dx=\frac{m}{mT+r}\int_{0}^{T}f(x)e^{ibx}dx,\quad\text{if}\quad bT=2\pi l,\\ \nonumber
&\frac{1}{mT+r}\sum_{j=0}^{m-1}e^{ibTj}\int_{0}^{T}f(x)e^{ibx}dx=\frac{(1-e^{ibTm})}{(mT+r)(1-e^{ibT})}\int_{0}^{T}f(x)e^{ibx}dx,
\quad\text{if}\quad bT\neq2\pi l.
\end{align}
In the first case we get
\begin{align}
\label{B3.5}
\lim_{a\to\infty}\frac{1}{a}\int_{0}^af(x)e^{ibx}dx&=\lim_{m\to\infty}\frac{m}{mT+r}\int_{0}^{T}f(x)e^{ibx}dx\\ \nonumber
&=\frac{1}{T}\int_{0}^Tf(x)e^{ibx}dx,
\end{align}
In the second case we have again by the Schwartz inequality, that
\begin{equation}
\label{B3.6}
\left|\frac{(1-e^{ibTm})}{(mT+r)(1-e^{ibT})}\int_{0}^{T}f(x)e^{ibx}dx\right|\leq \frac{2\sqrt{T}\|f\|_{L^2(0,T)}}{a|1-e^{ibT}|},
\end{equation}
thus we get
$$
\lim_{a\to\infty}\frac{1}{a}\int_{0}^af(x)e^{ibx}dx=0.
$$

\end{proof}

The next lemma is generalization of Lemma~\ref{Blem:3.2}.
\begin{Lemma}
\label{Blem:3.3}
Let the functions $f,g\colon\mathbb R\to \mathbb R$ have periods $T_1,T_2>0$ respectively. Assume
that $f\in L^2(0,T_1)$ and $g\in L^2(0,T_2)$ and
\begin{equation}
\label{B3.7}
\int_{0}^{T_1}f(x)dx=0.
\end{equation}
Then one has:

\begin{align}
\label{B3.8}
&\lim_{a\to\infty}\frac{1}{a}\int_{0}^af(x)g(x)dx=0,\quad \text{if} \quad \frac{T_1}{T_2}\notin\mathbb Q,\\ \nonumber
&\lim_{a\to\infty}\frac{1}{a}\int_{0}^af(x)g(x)dx=\frac{1}{nT_1}\int_{0}^{nT_1}f(x)g(x)dx,\quad \text{if} \quad \frac{T_1}{T_2}=\frac{m}{n},
\ \ m,n\in\mathbb Z.
\end{align}
\end{Lemma}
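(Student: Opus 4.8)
The plan is to prove the two cases by quite different means: the rational case is a direct periodicity argument, while the irrational case needs a Fourier approximation together with a uniform-in-$a$ bound in order to justify passing to the limit term by term.

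\textbf{Rational ratio.} Suppose $T_1/T_2=m/n$ with (say positive) integers $m,n$. Then $P:=nT_1=mT_2$ is a common period of $f$ and $g$, so $fg$ is $P$-periodic, and $fg\in L^1_{\mathrm{loc}}(\mathbb R)$ by the Cauchy--Schwarz inequality on a single period. For a periodic locally integrable function the running average converges to the average over one period: this is proved exactly as in Lemma~\ref{Blem:3.2}, writing $a=qP+r$ with $0\le r<P$ and bounding the ``tail'' term $\frac1a\int_{qP}^{qP+r}f(x)g(x)\,dx$ by $\frac1a\int_0^P|fg|\to 0$, with the factor $e^{ibx}$ simply replaced by $1$. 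Hence
\[
\frac1a\int_0^a f(x)g(x)\,dx\ \longrightarrow\ \frac1P\int_0^P f(x)g(x)\,dx=\frac1{nT_1}\int_0^{nT_1}f(x)g(x)\,dx .
\]
Note the hypothesis $\int_0^{T_1}f=0$ is not used in this case.

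\textbf{Irrational ratio.} Expand $f=\sum_{k\in\mathbb Z}c_k e^{2\pi i kx/T_1}$ in $L^2(0,T_1)$ and $g=\sum_{l\in\mathbb Z}d_l e^{2\pi i lx/T_2}$ in $L^2(0,T_2)$; the assumption $\int_0^{T_1}f=0$ says precisely $c_0=0$. Fix $\varepsilon>0$ and pick partial sums $f_N=\sum_{0<|k|\le N}c_ke^{2\pi i kx/T_1}$, $g_M=\sum_{|l|\le M}d_le^{2\pi i lx/T_2}$ with $\|f-f_N\|_{L^2(0,T_1)}<\varepsilon$ and $\|g-g_M\|_{L^2(0,T_2)}<\varepsilon$; by Bessel's inequality $\|f_N\|_{L^2(0,T_1)}\le\|f\|_{L^2(0,T_1)}$ and $\|g_M\|_{L^2(0,T_2)}\le\|g\|_{L^2(0,T_2)}$. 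Then decompose
\[
\frac1a\int_0^a fg=\frac1a\int_0^a f_Ng_M+\frac1a\int_0^a(f-f_N)g+\frac1a\int_0^a f_N(g-g_M).
\]
The first term is a finite sum of $c_kd_l\,\frac1a\int_0^a e^{i\beta x}\,dx$ with $\beta=2\pi(k/T_1+l/T_2)$; since $T_1/T_2\notin\mathbb Q$ and $0<|k|\le N$, we get $\beta\ne0$, and $\bigl|\frac1a\int_0^a e^{i\beta x}\,dx\bigr|\le 2/(|\beta|a)\to0$, so the first term tends to $0$ as $a\to\infty$. For the remaining two terms I would use the uniform bound
\[
\limsup_{a\to\infty}\frac1a\int_0^a|h_1(x)||h_2(x)|\,dx\ \le\ \Bigl(\tfrac1{T_1}\!\int_0^{T_1}\!|h_1|^2\Bigr)^{1/2}\Bigl(\tfrac1{T_2}\!\int_0^{T_2}\!|h_2|^2\Bigr)^{1/2},
\]
valid for any $T_1$-periodic $h_1\in L^2(0,T_1)$ and $T_2$-periodic $h_2\in L^2(0,T_2)$: apply Cauchy--Schwarz to $\frac1a\int_0^a|h_1h_2|$ and then let $a\to\infty$ in each factor $\frac1a\int_0^a|h_i|^2$, using the rational case for the $T_i$-periodic $L^1$ function $|h_i|^2$. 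Taking $(h_1,h_2)=(f-f_N,\,g)$ and $(h_1,h_2)=(f_N,\,g-g_M)$ bounds the $\limsup$ (in absolute value) of the last two terms by $\varepsilon\,\|g\|_{L^2(0,T_2)}/\sqrt{T_1T_2}$ and $\varepsilon\,\|f\|_{L^2(0,T_1)}/\sqrt{T_1T_2}$ respectively. Hence $\limsup_{a\to\infty}\bigl|\frac1a\int_0^a f(x)g(x)\,dx\bigr|\le \varepsilon\bigl(\|f\|_{L^2(0,T_1)}+\|g\|_{L^2(0,T_2)}\bigr)/\sqrt{T_1T_2}$, and letting $\varepsilon\downarrow0$ gives the limit $0$.

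\textbf{Main obstacle.} Everything is elementary except one genuinely analytic point: the Fourier series of $f$ and $g$ converge only in $L^2$, so the naive termwise reasoning ``each remaining frequency has zero average'' must be made rigorous by $L^2$-approximation together with an $a$-uniform control of the approximation error. That control is exactly the Cauchy--Schwarz/period-average inequality displayed above; with it in hand the rest is bookkeeping, and the rational case plus the trivial estimate on $\frac1a\int_0^a e^{i\beta x}\,dx$ supply all the inputs. An essentially equivalent alternative would read the claim as the mean ergodic theorem for the linear flow on the torus $(\mathbb R/T_1\mathbb Z)\times(\mathbb R/T_2\mathbb Z)$, which is uniquely ergodic exactly when $T_1/T_2$ is irrational.
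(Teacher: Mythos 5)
Your proof is correct, and its overall architecture matches the paper's: the rational case is handled identically (common period $nT_1=mT_2$, split off the tail, Cauchy--Schwarz), and the irrational case likewise rests on truncating the Fourier series of $f$ (with $c_0=0$ from \eqref{B3.7}) and controlling the truncation error by a Cauchy--Schwarz bound that is uniform in $a$ --- exactly the estimate the paper derives in \eqref{B3.11}. The one genuine divergence is in how the surviving oscillatory part is killed: the paper keeps $g$ intact and applies Lemma~\ref{Blem:3.2} to $\frac{1}{a}\int_0^a g(x)e^{2\pi i kx/T_1}dx$ (noting $kT_2/T_1\notin\mathbb Z$ when $T_1/T_2$ is irrational), whereas you also expand $g$ to a partial sum $g_M$ and reduce everything to the elementary bound $\left|\frac{1}{a}\int_0^a e^{i\beta x}dx\right|\le 2/(|\beta|a)$ with $\beta=2\pi(k/T_1+l/T_2)\neq 0$. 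Your route costs one extra approximation step (and hence the extra error term $\frac{1}{a}\int_0^a f_N(g-g_M)$, which you correctly control via Bessel), but in exchange it makes the irrational case self-contained and independent of Lemma~\ref{Blem:3.2}; the paper's route is shorter here because it reuses that lemma. Both are complete, and your closing remark that the statement is the mean ergodic theorem for an irrational linear flow on the two-torus is an accurate alternative framing.
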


\begin{proof}
Assume first that $\frac{T_1}{T_2}=\frac{m}{n},$ where $m,n\in\mathbb N,$ thus $nT_1=mT_2.$ We have for any
$a>nT_1,$ that $a=knT_1+r,$ where $0\leq r<nT_1$ and $k\in\mathbb N.$ Then we have by the periodicity of $f$ and $g,$ that
\begin{align}
\label{B3.9}
\frac{1}{a}\int_{0}^af(x)g(x)dx&=\frac{1}{knT_1+r}\int_{0}^{knT_1}f(x)g(x)dx+\frac{1}{knT_1+r}\int_{knT_1}^{knT_1+r}f(x)g(x)dx\\ \nonumber
&=\frac{k}{knT_1+r}\int_{0}^{nT_1}f(x)g(x)dx+\frac{1}{knT_1+r}\int_{knT_1}^{knT_1+r}f(x)g(x)dx.
\end{align}
It is clear that
\begin{align*}
\lim_{a\to\infty}\frac{k}{knT_1+r}\int_{0}^{nT_1}f(x)g(x)dx&=\lim_{k\to\infty}\frac{k}{knT_1+r}\int_{0}^{nT_1}f(x)g(x)dx\\
&=\frac{1}{nT_1}\int_{0}^{nT_1}f(x)g(x)dx,
\end{align*}
and by the Schwartz inequality
\begin{align*}
\left|\frac{1}{knT_1+r}\int_{knT_1}^{knT_1+r}f(x)g(x)dx\right|&\leq \frac{1}{knT_1}\int_{0}^{nT_1}|f(x)g(x)|dx\\
&\leq \frac{1}{knT_1}\|f(x)\|_{L^2(0,nT_1)}\|g(x)\|_{L^2(0,nT_1)}\\
&\to 0
\end{align*}
as $k\to\infty,$ thus the case $\frac{T_1}{T_2}=\frac{m}{n}$ is proven.
Assume now that $\frac{T_1}{T_2}\notin\mathbb Q.$ By the Fourier expansion we have that
$$f(x)=\sum_{n=-\infty}^{\infty}a_ne^{\frac{2i\pi nx}{T_1}}$$
in the $L^2(0,T_1)$ sense. Denote $P_n(x)=\sum_{k=-n}^{n}a_ke^{\frac{2i\pi kx}{T_1}},$ then
$$P_n(x)\to f(x)\quad\text{in}\quad L^2(0,T_1),$$
thus for any $\epsilon>0$ there exists $N\in\mathbb N$ such that
\begin{equation}
\label{B3.10}
\|f(x)-P_N(x)\|_{L^2(0,T_1)}\leq \epsilon.
\end{equation}
If $a=k_1T_1+r_1=k_2T_2+r_2$ where $k_1,k_2\in\mathbb N$ and $0\leq r_1<T_1$, $0\leq r_2<T_2$, then
we have by the Schwartz inequality that for big enough $a$ there holds,
\begin{align}
\label{B3.11}
\frac{1}{a}&\left|\int_0^a f(x)g(x)dx-\int_0^a P_N(x)g(x)dx\right|\\ \nonumber
&\leq \frac{1}{a}\int_0^a |f(x)-P_N(x)||g(x)|dx\\ \nonumber
&\leq \frac{1}{a} \|f(x)-P_N(x)\|_{L^2(0,a)}\|g(x)\|_{L^2(0,a)}\\ \nonumber
&\leq \frac{1}{a} \sqrt{k_1+1}\|f(x)-P_N(x)\|_{L^2(0,T_1)}\sqrt{k_2+1}\|g(x)\|_{L^2(0,T_2)}\\ \nonumber
&\leq \frac{\epsilon \|g(x)\|_{L^2(0,T_2)}}{\sqrt{T_1T_2}}\frac{\sqrt{(k_1+1)(k_2+1)}}{\sqrt{k_1k_2}}\\ \nonumber
&\leq \frac{2\epsilon \|g(x)\|_{L^2(0,T_2)}}{\sqrt{T_1T_2}},
\end{align}
which implies, that it suffices to prove the lemma for $P_N(x)$ instead of $f(x).$ From the condition
$\int_0^{T_1} f(x)dx=0$ we get $a_0=0,$ thus
$$P_N(x)=\sum_{k=-N}^{-1}a_ke^{\frac{2i\pi kx}{T_1}}+\sum_{k=1}^{N}a_ke^{\frac{2i\pi kx}{T_1}}.$$
Now, an application of Lemma~\ref{Blem:3.2} to each of the summands $a_ke^{\frac{2i\pi kx}{T_1}}$ completes the proof.
\end{proof}

\begin{Lemma}
\label{Blem:3.4}
Let $f,g\colon\mathbb R\to\mathbb R$ and $T_1,T_2>0$ be such that $f(x)$ is $T_1-$periodic, $g(x)$ is $T_2-$periodic and
$\frac{T_1}{T_2}\notin\mathbb Q.$ Assume furthermore, that $f(x)\in W^{1,2}(0,T_1)$ and $g(x)\in L^2(0,T_2).$
Then
\begin{equation}
\label{B3.12}
\lim_{a\to\infty}\frac{1}{a}\int_{0}^a f'(x)g(x)dx=0.
\end{equation}
\end{Lemma}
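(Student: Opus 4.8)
The plan is to deduce this directly from Lemma~\ref{Blem:3.3} applied to the pair $(f',g)$ in place of $(f,g)$. Lemma~\ref{Blem:3.3} requires that the first factor be periodic, square integrable over a period, and have zero mean over a period; our hypotheses already give the first two for $f'$, so the only thing left to verify is the zero-mean condition, and this is precisely where the assumption $f\in W^{1,2}(0,T_1)$ is used.

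First I would observe that a $T_1$-periodic function in $W^{1,2}(0,T_1)$ has an absolutely continuous representative on $\mathbb R$ satisfying $f(0)=f(T_1)$, so by the fundamental theorem of calculus
\[
\int_0^{T_1}f'(x)\,dx=f(T_1)-f(0)=0.
\]
Moreover $f'$ is itself $T_1$-periodic (the weak derivative of a periodic function is periodic) and lies in $L^2(0,T_1)$ by the definition of $W^{1,2}(0,T_1)$. Hence $f'$ meets all the hypotheses imposed on the first function in Lemma~\ref{Blem:3.3}, $g$ meets those imposed on the second, and $T_1/T_2\notin\mathbb Q$ holds by assumption.

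Then I would simply invoke the first alternative of \eqref{B3.8}, with $f'$ in the role of $f$, to conclude
\[
\lim_{a\to\infty}\frac1a\int_0^a f'(x)g(x)\,dx=0,
\]
which is exactly \eqref{B3.12}.

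There is essentially no obstacle here: the lemma is a one-line corollary of Lemma~\ref{Blem:3.3}. The only point deserving a moment's care is the regularity bookkeeping — identifying the periodic $W^{1,2}$ function with its absolutely continuous representative so that the endpoint values are well defined and equal, which is what forces the period-mean of $f'$ to vanish and unlocks Lemma~\ref{Blem:3.3}. If one wished to bypass even that, one could instead expand $f$ in a Fourier series on $(0,T_1)$, note that differentiating term by term annihilates the constant ($k=0$) mode, and apply Lemma~\ref{Blem:3.2} to each surviving exponential exactly as in the proof of Lemma~\ref{Blem:3.3}.
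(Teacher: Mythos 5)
Your proof is correct and is essentially identical to the paper's own argument: the paper likewise reduces the lemma to Lemma~\ref{Blem:3.3} by observing that $\int_0^{T_1}f'(x)\,dx=0$ follows from the periodicity of $f$. Your additional care about the absolutely continuous representative of a periodic $W^{1,2}$ function is a welcome but inessential elaboration of the same one-line reduction.
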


\begin{proof}
The proof directly follows from Lemma~\ref{Blem:3.3} as $\int_{0}^{T_1}f'(x)dx=0$ by the periodicity of $f.$
\end{proof}

\begin{Lemma}
\label{Blem:3.5}
Assume the function $f(\BGx)\colon\mathbb R^d\to\mathbb R$ is cell-periodic and continuous with a cell of periodicity
$R=\prod_{i=1}^d [0,T_i].$ Then for any vector $\lambda=(\lambda_1,\lambda_2,\dots,\lambda_d)\in\mathbb R^d$ one has
\begin{align*}
&\lim_{Q\to\infty}\frac{1}{|Q|}\int_{Q}f(\BGx)e^{i\lambda\cdot\BGx}d\BGx=0,\quad \text{if} \quad T_j\lambda_j\neq 2\pi l,\ \ \in\mathbb Z
\quad\text{for some}\quad j\in\{1,2,\dots,d\},\\ \nonumber
&\lim_{Q\to\infty}\frac{1}{|Q|}\int_{Q}f(\BGx)e^{i\lambda\cdot\BGx}d\BGx=\frac{1}{|R|}\int_{R}f(\BGx)e^{i\lambda\cdot\BGx}d\BGx,
\quad \text{if} \quad T_j\lambda_j=2\pi l_j, l_j\in\mathbb Z,\ \ j=1,2,\dots,d.
\end{align*}
\end{Lemma}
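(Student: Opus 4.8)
The plan is to expand $f$ in its multi-dimensional Fourier series with respect to the cell $R$ and then reduce, via a finite truncation, to the one-dimensional estimate already contained in Lemma~\ref{Blem:3.2}. Writing $\langle\mathbf{n},\BGx/\mathbf{T}\rangle=\sum_{j=1}^d n_j\xi_j/T_j$, one has formally $f(\BGx)e^{i\lambda\cdot\BGx}=\sum_{\mathbf{n}\in\mathbb Z^d}c_{\mathbf{n}}\,e^{i\sum_j(\lambda_j+2\pi n_j/T_j)\xi_j}$ with $c_{\mathbf{n}}=|R|^{-1}\int_R f(\BGx)e^{-2\pi i\langle\mathbf{n},\BGx/\mathbf{T}\rangle}\,d\BGx$. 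Because $Q=\prod_j[0,a_j]$ is a product region, a single Fourier mode contributes $|Q|^{-1}\int_Q e^{i\mu\cdot\BGx}\,d\BGx=\prod_{j=1}^d\big(a_j^{-1}\int_0^{a_j}e^{i\mu_j\xi_j}\,d\xi_j\big)$, and each one-dimensional factor has modulus at most $1$ and tends to $0$ as $a_j\to\infty$ whenever $\mu_j\neq0$. This is exactly the mechanism of Lemma~\ref{Blem:3.2}, now read coordinate by coordinate with $\mu_j=\lambda_j+2\pi n_j/T_j$.

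Next I would justify the truncation. Given $\delta>0$, since $f$ is continuous and $R$-periodic, by Fej\'er's theorem (or Stone--Weierstrass on the torus) there is a trigonometric polynomial $P(\BGx)=\sum_{\mathbf{n}\in F}c_{\mathbf{n}}^{P}e^{2\pi i\langle\mathbf{n},\BGx/\mathbf{T}\rangle}$ with $F\subset\mathbb Z^d$ finite and $\sup_{\BGx}|f(\BGx)-P(\BGx)|<\delta$; in the second case of the lemma I would further take the degree large enough that the index $-\mathbf{l}=(-l_1,\dots,-l_d)$ lies in $F$. Then $\big||Q|^{-1}\int_Q(f-P)e^{i\lambda\cdot\BGx}\,d\BGx\big|\le\sup|f-P|<\delta$ uniformly in $Q$, so it suffices to analyse the finite sum $|Q|^{-1}\int_Q P(\BGx)e^{i\lambda\cdot\BGx}\,d\BGx=\sum_{\mathbf{n}\in F}c_{\mathbf{n}}^{P}\prod_{j=1}^d\big(a_j^{-1}\int_0^{a_j}e^{i(\lambda_j+2\pi n_j/T_j)\xi_j}\,d\xi_j\big)$.

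The two cases then separate cleanly. If $T_{j_0}\lambda_{j_0}\notin2\pi\mathbb Z$ for some $j_0$, then $\lambda_{j_0}+2\pi n_{j_0}/T_{j_0}\neq0$ for every integer $n_{j_0}$, so in each term of the finite sum the $j_0$-th factor tends to $0$ while the others stay bounded by $1$; hence the finite sum tends to $0$ and $\limsup_{Q\to\infty}\big||Q|^{-1}\int_Q f e^{i\lambda\cdot\BGx}\big|\le\delta$, which gives the first assertion on letting $\delta\to0$. If instead $T_j\lambda_j=2\pi l_j$ for all $j$, then $\lambda_j+2\pi n_j/T_j=2\pi(l_j+n_j)/T_j$ vanishes precisely when $n_j=-l_j$, so among $\mathbf{n}\in F$ only $\mathbf{n}=-\mathbf{l}$ survives and $|Q|^{-1}\int_Q P e^{i\lambda\cdot\BGx}\to c_{-\mathbf{l}}^{P}=|R|^{-1}\int_R P(\BGx)e^{i\lambda\cdot\BGx}\,d\BGx$. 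Comparing $f$ with $P$ both over $Q$ and over $R$ (each error $<\delta$) gives $\limsup_{Q\to\infty}\big||Q|^{-1}\int_Q f e^{i\lambda\cdot\BGx}-|R|^{-1}\int_R f e^{i\lambda\cdot\BGx}\big|\le2\delta$, and $\delta\to0$ yields the second assertion. The same bookkeeping applies verbatim to a general rectangle $Q=\prod_j[a_j,b_j]$ with $b_j-a_j\to\infty$, since each factor $(b_j-a_j)^{-1}\int_{a_j}^{b_j}e^{i\mu_j\xi_j}\,d\xi_j$ obeys the same bounds.

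The only genuine obstacle is the exchange of the limit $Q\to\infty$ with the infinite Fourier sum, since the Fourier coefficients of a merely continuous $f$ need not be absolutely summable; this is exactly what forces the reduction to a trigonometric polynomial $P$, in the same spirit as the reduction to $P_N$ in the proof of Lemma~\ref{Blem:3.3}, and everything else is coordinatewise use of the one-dimensional facts from Lemma~\ref{Blem:3.2}. An alternative route is induction on $d$: integrate out $\xi_d$ first by Lemma~\ref{Blem:3.2}, use uniform continuity of $f$ on the torus to make that one-dimensional limit uniform in the remaining variables, and then apply the inductive hypothesis to the (continuous, periodic) limit function; I would still expect the Fourier-truncation version to be the shortest to write out.
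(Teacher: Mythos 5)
Your argument is correct, but it follows a genuinely different route from the paper's. The paper first replaces the general rectangle $Q$ by integer dilates $l\cdot R$ of the periodicity cell, disposes of the resonant case ($T_j\lambda_j\in 2\pi\mathbb Z$ for all $j$) by pure periodicity of $f(\BGx)e^{i\lambda\cdot\BGx}$, and in the non-resonant case applies Fubini together with the one-dimensional mechanism of Lemma~\ref{Blem:3.2} slice by slice to obtain a bound of the form $C/l^{|I|}$, where $I$ is the set of non-resonant coordinates. You instead approximate $f$ uniformly by a trigonometric polynomial (Fej\'er/Stone--Weierstrass on the torus) and compute each surviving mode explicitly as a product of one-dimensional averages $(b_j-a_j)^{-1}\int_{a_j}^{b_j}e^{i\mu_j\xi_j}\,d\xi_j$, which are trivially bounded by $1$ and vanish in the limit unless $\mu_j=0$. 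Your version buys several things: it needs only the elementary average of a pure exponential rather than the full Lemma~\ref{Blem:3.2}; it treats arbitrary rectangles $\prod_j[a_j,b_j]$ with $b_j-a_j\to\infty$ directly, whereas the paper's reduction to $l\cdot R$ is asserted as ``easy to see'' and really requires a boundary-layer estimate; and it makes transparent exactly where continuity of $f$ enters (uniform approximability). The paper's version is shorter on the page and avoids any appeal to Fej\'er's theorem, at the cost of leaving the Fubini/``analogy with Lemma~\ref{Blem:3.2}'' step and the reduction to dilated cells unexpanded. Your identification of the one genuine obstacle --- that the Fourier series of a merely continuous $f$ need not be absolutely summable, which is what forces the truncation to a polynomial, in the same spirit as the $P_N$ reduction in Lemma~\ref{Blem:3.3} --- is exactly right, and the minor device of enlarging the index set $F$ to contain $-\mathbf{l}$ is harmless (if $-\mathbf{l}\notin F$ the corresponding coefficient is simply zero and the comparison with $|R|^{-1}\int_R f e^{i\lambda\cdot\BGx}\,d\BGx$ still costs only $\delta$).
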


\begin{proof}
The proof is straightforward as this is a consequence of the previous Lemma. It is easy to see, that
$$\lim_{Q\to\infty}\frac{1}{|Q|}\int_{Q}f(\BGx)e^{i\lambda\cdot\BGx}d\BGx=\lim_{l\to\infty}\frac{1}{l^d|R|}\int_{l\cdot R}f(\BGx)e^{i\lambda\cdot\BGx}d\BGx,$$
where $l\in\mathbb N.$ If $T_i\lambda_i=2\pi l_i, l_i\in\mathbb Z,\ i=1,2,\dots,d$ then we have
$$\frac{1}{l^d|R|}\int_{l\cdot R}f(\BGx)e^{i\lambda\cdot\BGx}d\BGx=\frac{1}{|R|}\int_{R}f(\BGx)e^{i\lambda\cdot\BGx}d\BGx,$$
for all $l\in\mathbb N.$ Assume now the set
 $I=\{j\ : \ T_j\lambda j\neq 2\pi l, l\in\mathbb Z\}\cap\{1,2,\dots,d\}$ is not empty. Then we have by the analogy of the proof of Lemma~\ref{Blem:3.2}
 and the Fubini theorem, that
\begin{equation}
\label{B3.13}
\frac{1}{l^d|R|}\int_{l\cdot R}f(\BGx)e^{i\lambda\cdot\BGx}d\BGx\leq \frac{C}{l^{|I|}}\to 0\quad\text{as}\quad l\to\infty,
\end{equation}
where $C$ is a constant depending on the value $M=\max_{R}|f(x)|.$ The proof is finished now.
\end{proof}

\bibliographystyle{chicago}
\bibliography{newrefdavitnew}
\end{document}